\newtheorem{theorem}{Theorem}[section]
\newtheorem{lemma}[theorem]{Lemma}
\newtheorem{corollary}[theorem]{Corollary}
\numberwithin{equation}{section}
\begin{document}
\title[Fenchel-Willmore inequality for submanifolds]{Fenchel-Willmore inequality for submanifolds in manifolds with non-negative $k$-Ricci curvature}

\author[M. Ji]{Meng Ji}
\address{School of Mathematics and Applied Statistics,
University of Wollongong\\
NSW 2522, Australia}
\email{\href{mailto:mengj@uow. edu.au}{mengj@uow.edu.au}}

\author[K.-K. Kwong]{Kwok-Kun Kwong}
\address{School of Mathematics and Applied Statistics,
University of Wollongong\\
NSW 2522, Australia}
\email{\href{mailto:kwongk@uow.edu.au}{kwongk@uow.edu.au}}
\keywords {Willmore inequality, Fenchel inequality, mean curvature, non-negative sectional curvature, intermediate Ricci curvature}

\subjclass[2000]{53E10, 53A07 \and 53C42}

\begin{abstract}
We establish a sharp Fenchel-Willmore inequality for closed submanifolds of arbitrary dimension and codimension immersed in a complete Riemannian manifold with non-negative intermediate Ricci curvature and Euclidean volume growth. In the hypersurface case, this reduces to non-negative Ricci curvature. We also characterize the equality case. This generalizes the recent work of Agostiniani, Fogagnolo, and Mazzieri \cite{Agostiniani-Fogagnolo-Mazzieri}, as well as classical results by Chen, Fenchel, Willmore, and others.
\end{abstract}
\maketitle

\section{Introduction}\label{sec intro}
The celebrated Willmore inequality \cite{Willmore68} states that for a bounded domain $\Omega$ of $\mathbb{R}^3$ with smooth boundary, we have
$$
\int_{\partial \Omega} \sigma^2 \ge 4\pi,
$$
where $\sigma=\frac{1}{2}\text{tr }\mathrm{II}$ is the normalized mean curvature of $\partial \Omega$ and $\mathrm{II}$ denotes the second fundamental form.
The inequality is sharp and the equality holds if and only if $\Omega$ is a ball.

Another fundamental geometric inequality, Fenchel inequality \cite{Fenchel29}, asserts that for any closed, embedded curve $\Sigma$ in $\mathbb R^3$, the integral of the length of its curvature vector $\kappa$ satisfies
$$\int_{\Sigma}|\kappa|\ge 2\pi, $$
with equality only for planar convex curves.

These inequalities have motivated extensive research. Notably, in a relatively recent influential work, Agostiniani, Fogagnolo, and Mazzieri \cite{Agostiniani-Fogagnolo-Mazzieri} extended the classical Willmore inequality to the boundary of a smooth, bounded open set $\Omega$ in a complete $n+1$ dimensional manifold $M$ ($n\ge 2$) with nonnegative Ricci curvature and Euclidean volume growth. They established the following inequality:
\begin{equation*}
\int_{\partial \Omega}\left|\sigma\right|^{n} \ge \theta\left|\mathbb{S}^{n}\right|,
\end{equation*}
where $\sigma=\frac{1}{n}\mathrm{II}$ is the normalized mean curvature vector and $\theta$ is the asymptotic volume ratio defined by (which is well-defined and independent of $p$ by the Bishop-Gromov volume comparison theorem)
$$\theta=\lim_{r \rightarrow \infty} \frac{ |B(p, r)|}{|\mathbb{B}^{n+1}|r^{n+1}}. $$
Moreover, equality holds if and only if $M \setminus \Omega$ is isometric to $\left(\left[r_0, \infty\right) \times \partial \Omega, d r^2+\left(\frac{r}{r_0}\right)^2 g_{\partial \Omega}\right)$ with $r_0=\left(\frac{|\partial \Omega|}{\theta\left|\mathbb{S}^n\right|}\right)^{\frac{1}{n}}$.
In particular, $\partial \Omega$ is a connected umbilical hypersurface with constant mean curvature.
Here, $\mathbb{B}^{n+1} \subset \mathbb{R}^{n+1}$ denotes the standard unit ball, and $\mathbb{S}^{n} = \partial \mathbb{B}^{n+1}$ is the unit round sphere.

The result in \cite{Agostiniani-Fogagnolo-Mazzieri}, derived via a level-set method based on potential theory, has drawn significant interest due to its geometric implications. Wang \cite{Wang23} subsequently provided a more concise argument based on classical comparison techniques. Both works are confined to closed hypersurfaces that form the boundary of open bounded subsets. Generalizing the level-set approach to an immersed $\Sigma$ or to higher codimension appears to be challenging.

In a different direction, Chen \cite{Chen1971} established a Fenchel-Borsuk-Willmore type inequality for higher-codimensional submanifolds immersed in the Euclidean space. Specifically, he showed that if $\Sigma$ is an $n$-dimensional closed submanifold immersed in $\mathbb{R}^{n+m}$ and $\sigma$ is its normalized mean curvature vector, then
\begin{equation}\label{ineq chen}
\int_{\Sigma}|\sigma|^n \ge \left|\mathbb{S}^n\right|.
\end{equation}
Chen's proof of the Fenchel-Willmore inequality \eqref{ineq chen} relies on the Gauss map into the unit sphere, a construction specific to the Euclidean space that does not seem to be easily generalizable to more general ambient manifolds.

In light of the results discussed above, it is natural to ask whether a Fenchel-Willmore type inequality holds for submanifolds of arbitrary codimension immersed in a manifold with some kind of non-negative curvature. However, when the codimension is greater than one, the nonnegative Ricci curvature condition is insufficient to guarantee such an inequality. For example, the Eguchi-Hanson metric on $T \mathbb S^2$ is Ricci flat with Euclidean volume growth and has a totally geodesic submanifold $\mathbb S^2$ \cite[p. 270]{Anderson1990}, thus violating the inequality of the form $\int_\Sigma|\sigma|^2 \ge \theta |\mathbb S^2|$. Therefore, it is reasonable to consider extensions where the ambient manifold has nonnegative sectional curvature.

In this regard, we are able to prove the following sharp Fenchel-Willmore inequality for closed submanifolds\footnote{Throughout this paper, all manifolds and submanifolds are assumed to be smooth and orientable, but we do not require a submanifold to be connected.} of arbitrary dimension and codimension. In the codimension one case, our method offers a third proof, complementing those developed by Agostiniani-Fogagnolo-Mazzieri \cite{Agostiniani-Fogagnolo-Mazzieri} and by Wang \cite{Wang23}.

\begin{theorem}\label{thm general fenchel willmore}
Let $n, m \in \mathbb{N}$, and $(M, g)$ be a complete non-compact Riemannian manifold of dimension $n+m$ with nonnegative sectional curvature and asymptotic volume ratio $\theta>0$. Suppose $\Sigma$ is a closed $n$-dimensional submanifold immersed in $M$. Then
\begin{equation}\label{ineq fenchel willmore}
\int_{\Sigma}|\sigma|^n \ge \theta|\mathbb S^n|.
\end{equation}
If $m=1$, the curvature assumption may be relaxed to require only non-negative Ricci curvature.
\end{theorem}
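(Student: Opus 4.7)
My plan is to prove the inequality by analyzing the normal exponential map $\Phi\colon \nu\Sigma \to M$, $\Phi(x,v) = \exp_x(v)$, together with a Jacobi-field (Rauch / Heintze-Karcher) comparison, thereby avoiding both Chen's Gauss-map construction (specific to Euclidean space) and the level-set potential-theoretic method of Agostiniani-Fogagnolo-Mazzieri (which does not readily extend to higher codimension). Under the curvature hypothesis, the Jacobian of $\Phi$ will be pointwise dominated by its Euclidean counterpart $\det(I - tA_\xi)$; an AM-GM step then brings in $\langle\sigma(x),\xi\rangle$, and passing to the asymptotic volume of large geodesic balls produces the factor $\theta$.

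\textbf{Main computation.} Let $\mathcal{G}\subset \nu\Sigma$ be the set of $(x,t\xi)$ with $|\xi|=1$ and $t>0$ such that $s\mapsto \exp_x(s\xi)$ realizes $d(\cdot,\Sigma)$ on $[0,t]$; by completeness, $\Phi|_{\mathcal G}$ is essentially a bijection onto $M\setminus\Sigma$, and the focal/cut locus has image of measure zero. On $\mathcal G$, the second variation of arclength forces $t\lambda_i(\xi)\le 1$ for every eigenvalue $\lambda_i$ of the shape operator $A_\xi$. Under nonnegative sectional curvature (nonnegative Ricci if $m=1$), the Rauch / matrix Riccati comparison then gives
\begin{equation*}
|\det d\Phi|(x,t\xi) \,\le\, \det(I-tA_\xi) \,=\, \prod_{i=1}^n(1-t\lambda_i(\xi)) \,\le\, \bigl(1 - t\,\langle\sigma(x),\xi\rangle\bigr)^n,
\end{equation*}
each factor in the product being nonnegative on $\mathcal G$ and the last inequality being AM-GM. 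Fix $p_0\in M$ and set $D = \mathrm{diam}(\Sigma) + d(p_0,\Sigma)$; every $p\in B(p_0,R)\setminus\Sigma$ satisfies $d(p,\Sigma)\le R+D$. Writing $v=t\xi$ in polar coordinates along the fibers (so $dv = t^{m-1}\,dt\,d\xi$) and applying the change-of-variables formula yields
\begin{equation*}
|B(p_0,R)| \,\le\, \int_\Sigma \int_{\mathbb{S}^{m-1}\subset\, \nu_x\Sigma} \int_0^{R+D} t^{m-1}\bigl(1 - t\,\langle\sigma(x),\xi\rangle\bigr)_+^n \, dt \, d\xi \, dx,
\end{equation*}
where the indicator $\chi_{\mathcal G}$ has been dropped using the fact that on $\mathcal G$ the integrand coincides with the truncated version $(\cdot)_+^n$.

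\textbf{Asymptotics and conclusion.} For $\langle\sigma(x),\xi\rangle>0$ the inner $t$-integrand has compact support in $t$, so contributes $O(1)$ as $R\to\infty$; for $\langle\sigma(x),\xi\rangle\le 0$ it is a polynomial in $R$ of degree $n+m$ with leading coefficient $|\langle\sigma(x),\xi\rangle|^n/(n+m)$. Dividing by $R^{n+m}$ and sending $R\to\infty$,
\begin{equation*}
\omega_{n+m}\,\theta \,\le\, \frac{1}{n+m}\int_\Sigma \int_{\{\xi\,:\, \langle\sigma(x),\xi\rangle\le 0\}} |\langle\sigma(x),\xi\rangle|^n \, d\xi \, dx.
\end{equation*}
By the $\xi\mapsto -\xi$ symmetry and rotational invariance of $\mathbb{S}^{m-1}$, the inner $\xi$-integral equals $\tfrac12 |\sigma(x)|^n\int_{\mathbb{S}^{m-1}}|\xi_1|^n\, d\xi$. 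Combining with the beta-function identity $\tfrac12 |\mathbb{S}^n|\int_{\mathbb{S}^{m-1}}|\xi_1|^n\, d\xi = |\mathbb{S}^{n+m-1}|$ and $|\mathbb{S}^{n+m-1}| = (n+m)\omega_{n+m}$ reduces this to the desired inequality $\theta|\mathbb{S}^n|\le \int_\Sigma |\sigma|^n$.

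\textbf{Main obstacle.} The technical heart is the Jacobian comparison $|\det d\Phi|\le \det(I-tA_\xi)$ in the higher-codimension case $m\ge 2$: this requires a genuine \emph{matrix} Jacobi (or Riccati) comparison under nonnegative sectional curvature throughout $\mathcal G$, not merely a scalar Rauch bound applied Jacobi field by Jacobi field, together with careful bookkeeping for the focal/cut locus. For $m=1$ this reduces to the classical Heintze-Karcher inequality, governed by a scalar Riccati equation under $\mathrm{Ric}\ge 0$, which is why the hypothesis can be relaxed when $m=1$.
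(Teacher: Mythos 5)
Your proposal is correct and follows the same overall strategy as the paper's proof: parametrize points away from $\Sigma$ by the normal exponential map restricted to the set where it minimizes distance, bound the Jacobian by a Heintze--Karcher--type comparison, feed this into the change-of-variables formula for the volume of large regions, and extract the asymptotic volume ratio. The differences lie in the execution, and they are worth noting. First, you integrate over the full ball $B(p_0,R)$ and extract the leading $R^{n+m}$ coefficient; the paper instead integrates over the annular set $\{\,\alpha r < d(\cdot,\Sigma) < r\,\}$, divides by $(1-\alpha)r^{n+m}$, and lets $r\to\infty$ and then $\alpha\to 1^-$. Both routes produce the same leading-order constant, and your Beta-function identity $\tfrac12\,|\mathbb S^n|\int_{\mathbb S^{m-1}}|\xi_1|^n\,d\xi = |\mathbb S^{n+m-1}|$ does match the paper's evaluation of the fiber integral via the Euler integral of the first kind; I checked the normalizations and they agree. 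Second, and more substantively, you pass from $|\det d\Phi|\le \det(I-tA_\xi)$ to $(1-t\langle\sigma,\xi\rangle)^n$ via the matrix comparison plus the pointwise eigenvalue bounds $t\lambda_i\le 1$ (from the index form applied to each $(1-s/t)E_i$) and then AM--GM. This chain genuinely needs non-negative \emph{sectional} curvature to get the eigenvalue-by-eigenvalue bound, so it proves Theorem \ref{thm general fenchel willmore} as stated. The paper instead works directly with the scalar Riccati inequality on the partial traces $q_n$ and $q_m$ of $Q(s)$, together with Cauchy--Schwarz; this skips both the matrix Riccati comparison and the individual $t\lambda_i\le 1$ bounds, and only needs $\overline{\mathrm{Ric}}_k\ge 0$ with $k=\min(n,m-1)$ for $m>1$ (the stronger Theorem \ref{thm general fenchel willmore'}). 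So your route is somewhat more geometric and self-contained for the sectional-curvature case, but it cannot be relaxed to intermediate Ricci curvature without being replaced by the traced Riccati argument.

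Two small remarks on precision. You describe $\Phi|_{\mathcal G}$ as ``essentially a bijection onto $M\setminus\Sigma$''; in fact only surjectivity onto that set is needed (and is what holds: every point has a foot point on $\Sigma$ and a minimizing normal geodesic), and the paper's Lemma \ref{lem 2.2''} is careful to claim exactly this. Also, your set $D=\operatorname{diam}(\Sigma)+d(p_0,\Sigma)$ is fine but $d(p_0,\Sigma)$ alone suffices. Finally, note that the points where $\sigma(x)=0$ contribute $O(R^m)=o(R^{n+m})$ to your fiber integral and therefore drop out in the limit, which is why the argument is insensitive to whether $\sigma$ vanishes anywhere; the paper isolates this contribution explicitly as $I^0(\alpha,r)$ and shows it is negligible.
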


Indeed, the same result remains valid under the weaker assumption of non-negative $k$-Ricci curvature for some $k$ depending on $n$ and $m$. To avoid disrupting the flow of exposition with technical details regarding this curvature condition, we defer the definition of the $k$-Ricci curvature and the related discussion to the end of the section.

To state the equality case of Theorem \ref{thm general fenchel willmore}, we need to introduce some notations. Let $\Sigma^+:=\{x\in \Sigma: \sigma(x)\ne0\}$.
On $\Sigma^{+}$, we decompose $T_x^{\perp} \Sigma$ into the orthogonal direct sum $\widetilde{T}_x^{\perp} \Sigma \oplus \operatorname{span}(\sigma)$, where $\widetilde{T}_x^{\perp} \Sigma$ is the orthogonal complement of $\sigma(x)$ in $T_x^{\perp} \Sigma$. In this case, $z \in T_x^{\perp} \Sigma$ can be written as $z=y+t \frac{\sigma}{|\sigma|}$, and we identify it with the pair $(y, t)$. We use the convention that for the unit sphere in the Euclidean space, $\sigma$ is the inward pointing unit normal.

We now characterize the equality case in Theorem \ref{thm general fenchel willmore}.
\begin{theorem} \label{thm equality case}
Assume either the curvature condition stated in Theorem \ref{thm general fenchel willmore}, or the weaker condition given in Theorem \ref{thm general fenchel willmore'}.

\begin{enumerate}
\item[(a)]
Suppose $n= 1$ and the equality in \eqref{ineq fenchel willmore} holds.
Then $\Sigma$ is connected and the normal exponential map when restricted to
$ \{(x, z)\in T^\perp\Sigma^{+}:\langle z, \sigma(x)\rangle\le0\}=
\{(x, y, t) \in \widetilde{T}^{\perp} \Sigma^{+} \times \mathbb{R}: t \le 0\}$
is a diffeomorphism onto its image. Moreover, the pullback of the ambient metric under the exponential map on this set is given by
\begin{equation}\label{eq g(x, y, t)}
g(x, y, t)=d t^2+d y^2+(1-t|\sigma(x)|)^2 g_{\Sigma}.
\end{equation}
\item[(b)]
Suppose $n\ge2$. Then the equality in \eqref{ineq fenchel willmore} holds if and only if all the conclusions in (a) are satisfied and moreover, $\Sigma$ is an embedded umbilical submanifold and $\sigma$ is parallel and nonzero everywhere.

Furthermore, with a change of variable $r=r_{0}-t$ with $r_0=\left(\frac{|\Sigma|}{\theta\left|\mathbb{S}^n\right|}\right)^{\frac{1}{n}}$,
the pullback metric in \eqref{eq g(x, y, t)} can be written as
$$g(x, y, r)=dr^{2}+dy^{2}+\left(\frac{r}{r_{0}}\right)^{2}g_{\Sigma}, $$
defined on $\widetilde T^\perp\Sigma\times[r_{0}, \infty)$.
\item [(c)]
Let $n \ge 1$ and suppose the equality in \eqref{ineq fenchel willmore} holds. For each $r>0$, let $\Sigma_r^{+}:=\{(x, z)\in T^{\perp}\Sigma^{+}:\langle z, \sigma(x)\rangle\le 0, |z|< r\}$
and $N_r(\Sigma):=\{p\in M:d(p, \Sigma)< r\}$ be the tubular neighborhood of $\Sigma$. Then
$$
\lim_{r\to\infty} \frac{|\exp(\Sigma_r^{+})|} {|N_r(\Sigma)|} =1.
$$
i.e. the image under the normal exponential map of normal vectors lying in the half-space in the direction of $-\sigma$ fills the neighborhood of $\Sigma$ with asymptotic volume density one.
\end{enumerate}
\end{theorem}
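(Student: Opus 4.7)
I expect the proof of Theorem~\ref{thm general fenchel willmore} to proceed via the restricted normal exponential map
\[
\Phi\colon \Omega^{-}_\Sigma := \{(x, z)\in T^{\perp}\Sigma^{+} : \langle z, \sigma(x)\rangle \le 0\} \longrightarrow M,
\]
parametrized as $(x, y, t)\mapsto \exp_x(y + t\sigma(x)/|\sigma(x)|)$ with $t \le 0$, by bounding its Jacobian pointwise via (i) a Riccati/Jacobi comparison supplied by the non-negative $k$-Ricci hypothesis and (ii) the AM--GM estimate $\det(I - tA_{\sigma/|\sigma|}) \le (1 - t|\sigma|)^n$ on the tangential factor. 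Integrating the combined bound over $\Omega^{-}_\Sigma$ and comparing with the asymptotic volume of $M$ produces \eqref{ineq fenchel willmore}. The plan is to run this argument in reverse: equality must force every step to be sharp pointwise, and each saturation must be unpacked.

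\textbf{Part (a).} For $n=1$ and equality, sharpness of the Jacobi comparison along each normal geodesic $\gamma_x(t) := \exp_x(t\sigma(x)/|\sigma(x)|)$, $t\le 0$, forces the sectional curvatures $K(\dot\gamma_x, e)$ to vanish for every $e\in T_x\Sigma$. The $\Sigma$-directed Jacobi fields along $\gamma_x$ are then exactly $(1 - t|\sigma(x)|)\,e$ parallel transported, while fields in the $\widetilde T^{\perp}\Sigma^{+}$ factor remain unit, producing the warped form \eqref{eq g(x, y, t)}. Pointwise sharpness of $J_\Phi$ also rules out focal points and self-intersections on $\Omega^{-}_\Sigma$, so $\Phi$ is a diffeomorphism onto its image. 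Connectedness of $\Sigma$ follows because, if $\Sigma = \Sigma_1\sqcup\Sigma_2$ with both nonempty, applying Theorem~\ref{thm general fenchel willmore} to each component independently would give $\int_\Sigma|\sigma|^n \ge 2\theta|\mathbb{S}^n|$, a strict violation of equality.

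\textbf{Part (b).} For $n\ge 2$, equality additionally saturates AM--GM, so $A_{\sigma/|\sigma|} = |\sigma|\,\mathrm{Id}$; applying the same rigidity to tangential perturbations of the normal direction $\sigma/|\sigma|$ within $T^{\perp}\Sigma^{+}$ extends this to $A_\nu = \langle\nu, \sigma\rangle\,\mathrm{Id}$ for every $\nu\in T^{\perp}\Sigma$, i.e., full umbilicity $A = \sigma\otimes g_\Sigma$. The Codazzi identity then yields $(n-1)\nabla^{\perp}_X\sigma \equiv 0$, so $\sigma$ is parallel and hence nonzero everywhere. Embeddedness comes from the injectivity of $\Phi$ on $\Omega^{-}_\Sigma$ together with the global warped product structure, which cannot accommodate a self-intersection of $\Sigma$. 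Matching $|\Sigma|$ to the leading-order slice volume identifies $|\sigma| = 1/r_0$ with $r_0 = (|\Sigma|/(\theta|\mathbb{S}^n|))^{1/n}$, and the substitution $r = r_0 - t$ converts \eqref{eq g(x, y, t)} into the stated form, extended to $r \ge r_0$ along the flow in the $-\sigma/|\sigma|$ direction.

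\textbf{Part (c) and main obstacle.} The warped product Jacobian from (a) gives
\[
|\exp(\Sigma_r^{+})| = \int_\Sigma \int_{\{(y, t)\in \widetilde T^{\perp}_x\Sigma^{+}\times\mathbb{R}_{\le 0}:\, |y|^2 + t^2 \le r^2\}} (1 - t|\sigma(x)|)^n\, dy\, dt\, d\mu_\Sigma(x).
\]
Rescaling $(y, t) = r(\tilde y, \tilde t)$, using equality $\int_\Sigma|\sigma|^n = \theta|\mathbb{S}^n|$, and the Beta-function identity $\int_{\tilde t\le 0,\, |\tilde y|^2 + \tilde t^2\le 1}|\tilde t|^n\, d\tilde y\, d\tilde t = |\mathbb{B}^{n+m}|/|\mathbb{S}^n|$ yield $|\exp(\Sigma_r^{+})| = \theta|\mathbb{B}^{n+m}| r^{n+m}(1 + o(1))$. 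Since $N_r(\Sigma)\subset B(p, r + \mathrm{diam}\,\Sigma)$ for any fixed $p\in\Sigma$, the asymptotic volume ratio hypothesis gives $|N_r(\Sigma)| = \theta|\mathbb{B}^{n+m}| r^{n+m}(1 + o(1))$, and the inclusion $\exp(\Sigma_r^{+})\subseteq N_r(\Sigma)$ forces the ratio to $1$. I expect the hardest step to be promoting pointwise rigidity along individual normal geodesics to global statements, particularly the parallelism of $\sigma$ through Codazzi and the upgrade from the a.e.\ equality produced by the integral identity to pointwise equality via continuity of the shape operator; under the weaker $k$-Ricci hypothesis of Theorem~\ref{thm general fenchel willmore'}, the Jacobi comparison only controls partial traces of the curvature operator, so an additional averaging-then-tracing step will be needed to recover control on individual sectional curvatures.
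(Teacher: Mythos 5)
Your overall plan---run the Jacobian argument in reverse, saturate the Riccati/Cauchy--Schwarz steps, read off the warped-product metric, and then compute the fiber volume to get part (c)---correctly identifies the structure of the paper's proof, and your part (c) computation is essentially the paper's. But there are three substantive gaps in the rigidity argument that you flag without resolving, and they are the hard content of the proof.

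First, you assert that ``equality must force every step to be sharp pointwise'' but never make this precise. The paper proves this in two lemmas by a contradiction argument: if the normal exponential map failed to be distance-minimizing or the pointwise Jacobian bound $|\det D\Phi_r|\ge r^m(1-r\langle z,\sigma\rangle)^n$ failed at a single $(x_0,z_0)$, then by continuity the defect persists on an open neighbourhood $V$, and a careful fiber-integral estimate (extracting a uniformly positive contribution $\liminf (1-\alpha)^{-1}r^{-(n+m)}I(\alpha,r)>0$ from $V$) shows the Fenchel--Willmore inequality would then be \emph{strict}, contradicting equality. Your proposal skips this entirely; ``pointwise sharpness rules out focal points and self-intersections'' is the conclusion you need, not an argument.

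Second, your Codazzi step for part (b) is wrong as stated. Tracing Codazzi over an umbilical $\Sigma$ gives $(n-1)\nabla^\perp_{e_i}\sigma=\sum_j(\bar R(e_i,e_j)e_j)^\perp$, \emph{not} zero. You must separately show the right-hand side vanishes. The paper obtains this from the equality case of the scalar Riccati inequality (which forces the partial traces $\sum_i\langle\bar R(E_i,\gamma')\gamma',E_i\rangle$ and $\sum_\alpha\langle\bar R(E_\alpha,\gamma')\gamma',E_\alpha\rangle$ to vanish identically along the normal geodesics) and then a nontrivial positivity argument: the bilinear form $S(U,V)=\sum_i\langle\bar R(U,e_i)e_i,V\rangle+\langle\bar R(U,z)z,V\rangle$ on $T_x\Sigma\oplus\mathrm{span}(z)$ is non-negative by $\overline{\mathrm{Ric}}_n\ge0$, so $S(z,z)=0$ forces $S(z,\cdot)=0$, which kills the curvature term in Codazzi. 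You correctly anticipate that the $k$-Ricci hypothesis ``only controls partial traces,'' but you do not supply this extraction step, which is precisely where the subtlety lies.

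Third, you do not argue that $\sigma\ne0$ \emph{everywhere} when $n\ge2$, i.e.\ that $\Sigma^+=\Sigma$. The paper establishes this by observing that $\sigma$ parallel forces $|\sigma|$ to be locally constant on $\Sigma^+$, so that $\Sigma^+$ is both open and closed in $\Sigma$, and then invokes connectedness (which you do prove). Without this, the conclusion that the pullback metric is defined on all of $\widetilde T^\perp\Sigma\times[r_0,\infty)$ does not follow. In summary, your proposal reproduces the paper's framework and part (c) faithfully but leaves all three promotion-to-pointwise/rigidity-extraction steps as acknowledged but unresolved gaps.
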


We remark that if a sharp constant is not required, a Fenchel-Willmore type inequality for a closed submanifold can already be derived by setting $f = 1$ in \cite[Theorem 1.4]{Brendle2023}, followed by an application of Holder's inequality, at least in the case where $M$ has non-negative sectional curvature.

In addition to the classical and more recent results on Fenchel-Willmore-type inequalities, this work is also inspired by the approach introduced by Brendle in his proof of Sobolev-type inequalities on manifolds with non-negative sectional curvature \cite{Brendle2023}. His approach is in turn partly motivated by techniques underlying the Alexandrov-Bakelman-Pucci maximum principle \cite{cabre2008elliptic}. It also has some similarities with the optimal transport proof of the isoperimetric inequality \cite{BrendleEichmair2023}. In \cite[Theorem 1.4]{Brendle2023}, Brendle constructs a ``transport map'' from a subset of the normal bundle of $\Sigma$ into the ambient manifold $M$ by using the solution to a linear elliptic equation on $\Sigma$ with a Neumann boundary condition (if $\partial \Sigma\ne\emptyset$). He then proves the surjectivity of this map onto suitable subsets of $M$, estimates its Jacobian by analyzing the elliptic equation together with an associated matrix Riccati equation along geodesics coming from $\Sigma$, and completes the argument via the change-of-variables formula for integrals.

We follow a similar strategy: constructing a ``transport map'', establishing its surjectivity onto suitable subsets, and applying Jacobian estimates in a change-of-variables formula to control the corresponding integrals. However, our approach differs in several respects. First, we avoid solving an auxiliary elliptic equation; our transport map is defined directly using the normal exponential map. Second, instead of the Alexandrov-Bakelman-Pucci principle and matrix Riccati methods, we prove a Heintze-Karcher type Jacobian comparison (which is already known) and a monotonicity formula for the Jacobian determinant (Lemma \ref{lem 2.4''}) by working with the scalar Riccati inequality, which lets us assume weaker curvature conditions. Third, in the higher-codimension setting, we carry out a more precise analysis of the fiber integrals arising from the volume estimate, as discussed below.

Brendle's Sobolev inequality (\cite[Theorem 1.4]{Brendle2023}) is sharp in the Euclidean setting for codimensions up to two. In higher codimensions, sharpness fails because one of the fiber‐integral estimates relies on the algebraic inequality $b^{\frac{m}{2}}-a^{\frac{m}{2}} \le \frac{m}{2}(b-a)$ for $0 \le a< b \le 1$ and $m\ge2$, which is no longer optimal when $m>2$.

We overcome this difficulty by carrying out a more precise evaluation of the relevant fiber integrals, rather than bounding them, using a representation formula \eqref{eq euler} related to the so-called Euler integral of the first kind, expressed in terms of the gamma function $\Gamma$. Integrals of the form $\int_0^1 t^p (1 - t^2)^q \, dt$ appear repeatedly in our analysis, both in the estimation of fiber integrals in the proof of the Fenchel-Willmore inequality and in the proof of rigidity. Specifically, we compute the fiber integrals (up to error terms that vanish in the limiting process) using this representation formula. The use of the gamma function in this context allows us to obtain rather precise expressions, thereby preserving the sharp constant for all codimensions $m$ and $\theta > 0$.

Moreover, the equality case in Brendle's result is achieved only when $\Sigma$ is a flat disk (with boundary) of codimension up to two in Euclidean space, whereas in our setting, equality holds only if $\Sigma$ is closed, umbilical, and with constant $|\sigma|>0$ (if $n>1$), with $\theta$ possibly less than $1$. Consequently, our analysis must proceed along a different path to address this difference. For example, our argument requires separate treatment on the set where the mean curvature vector vanishes and where it is non-zero, and our equality case analysis will accordingly focus on the region where $\sigma\ne 0$.

We now return to the earlier remark concerning a weaker curvature assumption. It turns out that the curvature condition stated in Theorem~\ref{thm general fenchel willmore} can be substantially relaxed, especially when $n$ and $m$ are large or when $m=1$, to a condition involving the so-called $k$-Ricci curvature.

We begin by recalling the definition of $k$-Ricci curvature (also referred to as intermediate Ricci curvature) on a Riemannian manifold $(M, g)$. Let $v \in T_pM$ be a unit tangent vector, and let $W \subset T_pM$ be a $k$-dimensional subspace orthogonal to $v$. The $k$-Ricci curvature in the direction of $v$ with respect to $W$ is defined as
$$\overline{\mathrm{Ric}}_k(v, W) = \sum_{i=1}^{k} \langle \bar{R}(v, e_i)e_i, v \rangle, $$
where $\{e_i\}_{i=1}^{k}$ is any orthonormal basis of $W$ and $\bar R$ is the Riemannian curvature tensor. This notion interpolates between sectional curvature and Ricci curvature: the case $k=1$ recovers sectional curvature, while $k = \dim M - 1$ gives the Ricci curvature in the direction of $v$.

We say $(M, g)$ has non-negative $k$-Ricci curvature, denoted $\overline{\mathrm{Ric}}_k \ge 0$, if $\overline{\mathrm{Ric}}_k(v, W) \ge 0$ at every point $p \in M$, for every unit vector $v \in T_pM$, and every $k$-plane $W \subset T_pM$ orthogonal to $v$. Clearly, the cases $k=1$ and $k=\dim M-1$ recover non-negative sectional curvature and non-negative Ricci curvature, respectively. It is straightforward to verify that $\overline{\mathrm{Ric}}_k \ge 0$ implies $\overline{\mathrm{Ric}}_l \ge 0$ for all $l > k$.

Lower bounds on the intermediate $k$-Ricci curvature have been used to bridge the gap between results based on sectional curvature and those based on Ricci curvature in the earlier works of Wu \cite{Wu1979}, Shen \cite{Shen1993}, and Shen-Wei \cite{ShenWei1993}. Since then, many results related to volume growth, diameter bounds, Betti numbers, geometric flows, optimal transport, and other geometric and topological applications have been obtained under a lower bound on this curvature. See, for instance, \cites{BrendleHuisken2017, GuijarroWilhelm2018, Chahine2019, KettererMondino2018}. For more details, a comprehensive list of articles on intermediate $k$-Ricci curvature can be found on the webpage \cite{IntermediateRicciCurvatureList}.

We are able to weaken the curvature assumption in Theorem \ref{thm general fenchel willmore} as follows.
\begin{theorem}\label{thm general fenchel willmore'}
The conclusion of Theorem \ref{thm general fenchel willmore} still holds under the weaker assumption that the $k$-Ricci curvature is non-negative, where
\begin{equation}\label{eq k}
k=
\begin{cases}
\min(n, m-1), & \text{if } m > 1, \\
n, & \text{if } m = 1.
\end{cases}
\end{equation}
Note that in both cases, the Ricci curvature of $M$ is non-negative and $\theta$ is still well-defined.
\end{theorem}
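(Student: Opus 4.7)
The plan is to run the transport-map strategy sketched in the Introduction, arranging matters so that the curvature hypothesis enters only through scalar Riccati inequalities whose curvature terms are trace-type and hence controlled by intermediate Ricci curvature. The transport map is the normal exponential map $\Phi(x,z):=\exp_x(z)$ restricted to $\mathcal{A}:=\{(x,z)\in T^\perp\Sigma^+ : \langle z,\sigma(x)\rangle\leq 0\}$. Fix a basepoint $p_0\in M$; the argument compares the volume of $\Phi(\mathcal{A}\cap\{|z|\leq R\})$ with $|B(p_0,R)|\sim \theta\,|\mathbb{B}^{n+m}|\,R^{n+m}$ as $R\to\infty$.

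For the coverage step, given $p\in B(p_0,R)$ sufficiently far from $\Sigma$, a minimizer $x$ of $d(p,\cdot)|_\Sigma$ provides, by the first variation, a vector $z:=\exp_x^{-1}(p)\in T_x^\perp\Sigma$ with $\Phi(x,z)=p$. Tracing $\operatorname{Hess}_\Sigma d(p,\cdot)\geq 0$ over $T_x\Sigma$ and applying the Laplacian-comparison inequality $\operatorname{tr}_{T_x\Sigma}\nabla^2 d(p,\cdot)\leq n/d(p,\cdot)$ yields $\langle z,\sigma(x)\rangle\leq 1$; the minimizer thus lies in a $1/|z|$-enlargement of $\mathcal{A}$. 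This Laplacian comparison is a scalar-Riccati consequence of $\overline{\operatorname{Ric}}_n\geq 0$ along $T_x\Sigma$, so this step uses only the $n$-Ricci hypothesis.

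For the Jacobian step, the Heintze-Karcher type bound $|\det d\Phi(x,r\zeta)|\leq (1+r\langle\zeta,\sigma(x)\rangle)_+^n\,r^{m-1}$, $\zeta:=z/|z|$, is obtained by factoring $d\Phi$ along the orthogonal decomposition of $T_{(x,z)}(T^\perp\Sigma)$ into the $n$-dimensional tangential block $T_x\Sigma$, the radial direction spanned by $\zeta$, and the $(m-1)$-dimensional transverse normal block $T_x^\perp\Sigma\cap\zeta^\perp$. The tangential block consists of Jacobi fields along $\gamma(t):=\exp_x(t\zeta)$ with initial data $(X_i,-A_\zeta X_i)$ for an orthonormal basis $\{X_i\}$ of $T_x\Sigma$; the transverse block consists of Jacobi fields with initial data $(0,W_j)$ for an orthonormal basis $\{W_j\}$ of $T_x^\perp\Sigma\cap\zeta^\perp$. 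On each block, Cauchy-Schwarz applied to the logarithmic derivative $U$ of the block determinant, $\operatorname{tr}(U^2)\geq (\operatorname{tr} U)^2/\dim$, reduces the matrix Riccati equation to a scalar Riccati inequality whose curvature input is precisely $\sum_{i=1}^n\langle\bar R(\gamma',X_i)X_i,\gamma'\rangle$ on the tangential block and $\sum_{j=1}^{m-1}\langle\bar R(\gamma',W_j)W_j,\gamma'\rangle$ on the transverse block. Both sums are non-negative under $\overline{\operatorname{Ric}}_n\geq 0$ and $\overline{\operatorname{Ric}}_{m-1}\geq 0$ respectively, and both are implied by $\overline{\operatorname{Ric}}_{\min(n,m-1)}\geq 0$; when $m=1$ the transverse block disappears and only $\overline{\operatorname{Ric}}_n\geq 0$ is required.

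Combining the two steps, the fiber integration over $\{z\in T_x^\perp\Sigma:\langle z,\sigma\rangle\leq 0,\,|z|\leq R\}$ in polar coordinates is carried out using the Euler-integral representation of $\int_0^1 t^p(1-t^2)^q\,dt$ via the Gamma function, giving a precise expression modulo $o(R^{n+m})$; division by $|\mathbb{B}^{n+m}|R^{n+m}$ and the limit $R\to\infty$ then yield $\int_\Sigma|\sigma|^n\geq \theta|\mathbb{S}^n|$. The main obstacle I expect is the block decoupling in the Jacobian step: verifying that the Cauchy-Schwarz trace-to-scalar reduction on each block introduces curvature contributions summed over exactly $n$ or $m-1$ mutually orthogonal directions, with no cross-block terms that would force a stronger (smaller-$k$) intermediate Ricci assumption. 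A secondary subtlety is that the coverage step places the minimizer only in a $1/|z|$-enlargement of $\mathcal{A}$; the precise fiber integration rather than a crude bound is what lets this discrepancy vanish in the limit.
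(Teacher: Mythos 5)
Your overall strategy mirrors the paper's: a transport map via the normal exponential map, coverage by distance minimizers, a Heintze--Karcher/Riccati Jacobian bound via scalar Riccati inequalities and Cauchy--Schwarz on partial traces, and precise fiber integration with the Euler/Gamma identity. Several intermediate steps are carried out differently but equivalently. For the angle bound you use a Hessian/Laplacian-comparison argument to get $\langle z,\sigma(x)\rangle\le1$; the paper instead uses the second variation of energy along fields $Z_i(s)=(r-s)E_i(s)$ to establish $1-r\langle\sigma(\bar x),\bar z\rangle\ge0$ -- the same estimate, using $\overline{\mathrm{Ric}}_n\ge0$ either way (the paper's variational version is slightly more robust at cut points of $p$). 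For the Jacobian you split into three blocks (tangential, radial, transverse), while the paper uses two (tangential, all-normal), defining $q_m$ over all $m$ normal directions and observing that the radial curvature term $\langle\bar R(\gamma',\gamma')\gamma',\gamma'\rangle$ vanishes, so only $m-1$ directions contribute; this is where the $\overline{\mathrm{Ric}}_{m-1}$ condition enters. Both decompositions give the same determinant bound, and yours is arguably cleaner since the radial block is exact. One caveat: the blocks of $P$ do not actually decouple, so ``the logarithmic derivative of the block determinant'' is not quite the right object; what you should use (and what the paper uses) is the partial trace of the full $Q=P'P^{-1}$ over block indices, which satisfies $\operatorname{tr}_{\mathrm{block}}(Q^2)\ge(\operatorname{tr}_{\mathrm{block}}Q)^2/\dim$ by Cauchy--Schwarz because the off-block entries of $Q$ only help. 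Finally, your ball-volume comparison $|B(p_0,R)|\lesssim\int\int|\det d\Phi|$ does in fact yield the sharp constant $|\mathbb{S}^n|$ directly, whereas the paper uses a thin-shell argument with a double limit $\alpha\to1^-$, $r\to\infty$ (largely, it seems, to set up the rigidity analysis).

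The one genuine gap is your restriction of the transport map to $T^\perp\Sigma^+$ from the outset. A distance minimizer $x\in\Sigma$ for a far-away point $p$ may well lie in $\Sigma^0=\{\sigma=0\}$, in which case $(x,z)$ belongs neither to $\mathcal{A}$ nor to any $1/|z|$-enlargement of it, since those sets are confined to fibers over $\Sigma^+$. Your coverage step then fails to account for such $p$, and you cannot simply assume $\Sigma^0$ is empty or has measure zero -- the theorem itself is what ultimately rules out $\Sigma$ being minimal. The paper handles this by integrating the Jacobian over all of $\Sigma$, splitting the estimate into $I^0$ (over $\Sigma^0$) and $I^+$ (over $\Sigma^+$), and showing $I^0$ is negligible: on $\Sigma^0$ the Jacobian bound collapses to $|\det D\Phi_r|\le r^m$, so $I^0=O(r^m)=o(r^{n+m})$. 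This is an easy addition to your argument, but it must be included.
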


As an immediate consequence, we have
\begin{corollary}
There is no $n$-dimensional closed minimal submanifold immersed in a complete Riemannian manifold $(M^{n+m}, g)$ with $\overline{\mathrm{Ric}}_k \ge 0$ and Euclidean volume growth, where $k$ is given by \eqref{eq k}.
\end{corollary}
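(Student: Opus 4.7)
The plan is to argue by contradiction and apply Theorem \ref{thm general fenchel willmore'} directly. Suppose, for the sake of contradiction, that there exists a closed $n$-dimensional minimal submanifold $\Sigma$ immersed in $(M^{n+m}, g)$, where $M$ satisfies $\overline{\mathrm{Ric}}_k \ge 0$ for $k$ as in \eqref{eq k} and has Euclidean volume growth, i.e. $\theta > 0$.

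By the definition of minimality, the mean curvature vector of $\Sigma$ vanishes identically, so $\sigma \equiv 0$ on $\Sigma$, which gives
\begin{equation*}
\int_\Sigma |\sigma|^n = 0.
\end{equation*}
On the other hand, Theorem \ref{thm general fenchel willmore'} asserts that
\begin{equation*}
\int_\Sigma |\sigma|^n \ge \theta\,|\mathbb{S}^n| > 0,
\end{equation*}
since $\theta > 0$ and $|\mathbb{S}^n| > 0$. This is a contradiction, so no such $\Sigma$ can exist.

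There is essentially no obstacle here; the statement is a direct consequence of the strict positivity $\theta|\mathbb{S}^n| > 0$ on the right-hand side of \eqref{ineq fenchel willmore}. The only point worth noting is that Theorem \ref{thm general fenchel willmore'} is stated for \emph{immersed} closed submanifolds (as emphasized in the footnote concerning orientability and connectedness), so the conclusion indeed applies to the immersed setting claimed in the corollary, with no embeddedness hypothesis required.
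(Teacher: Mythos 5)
Your argument is correct and is precisely the immediate deduction the paper has in mind (the corollary is stated without proof as ``an immediate consequence'' of Theorem \ref{thm general fenchel willmore'}): minimality forces $\int_\Sigma|\sigma|^n=0$, contradicting $\int_\Sigma|\sigma|^n\ge\theta|\mathbb S^n|>0$.
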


The weakening of the curvature assumption to a $k$-Ricci condition is made possible by a more refined analysis of Jacobian determinant via the Riccati inequality, which strengthens the classical Jacobian comparison theorem of Heintze and Karcher \cite{HeintzeKarcher1978}. While a comparison result involving $k$-Ricci curvature is known (see \cite{Chahine2019}), we are not aware of any reference establishing the monotonicity of the Jacobian determinant under this weaker assumption (Lemma \ref{lem 2.4''}). As this monotonicity plays a crucial role in the rigidity argument, we provide the full details here.

Moreover, the non-negative $k$-Ricci curvature condition also proves sufficient for characterizing umbilicity, and for establishing that the mean curvature vector is parallel when $n > 1$ by invoking the Codazzi equation.

The remainder of this paper is organized as follows. In Section \ref{sec willmore fenchel}, we present the proof of the Fenchel-Willmore inequality, Theorem \ref{thm general fenchel willmore'}, under the non-negative $k$-Ricci curvature assumption. In Section \ref{sec equality}, we begin by presenting examples of $\Sigma$ and $M$ that attain the equality, and then proceed to analyze the equality case and prove Theorem \ref{thm equality case}.

\noindent\textbf{Acknowledgements.}
The second named author is grateful to Ben Andrews, Jihye Lee and Yong Wei for valuable discussions. He was supported by the University of Wollongong Early-Mid Career Researcher Enabling Grant and the UOW Advancement and Equity Grant Scheme for Research 2024.

\section{Proof of the Fenchel-Willmore inequality}\label{sec willmore fenchel}
In this paper, we consider a complete, noncompact Riemannian manifold $(M, g)$ of dimension $n + m$, together with a closed immersed submanifold $\Sigma \subset M$ of dimension $n$. The Levi-Civita connection of $(M, g)$ is denoted by $\bar{D}$, and its Riemann curvature tensor by $\bar{R}$, with the convention $\bar{R}(X, Y)Z = \bar{D}_X \bar{D}_Y Z - \bar{D}_Y \bar{D}_X Z - \bar{D}_{[X, Y]} Z$. The second fundamental form of $\Sigma$, denoted by $\mathrm{II}$, is a symmetric bilinear form on the tangent bundle of $\Sigma$ that takes values in the normal bundle $T^\perp \Sigma$. At a point $x \in \Sigma$, for tangent vector fields $X, Y$ and a normal vector field $V$, the second fundamental form satisfies $\langle \mathrm{II}(X, Y), V \rangle = \langle \bar{D}_X Y, V \rangle$. The normalized mean curvature vector of $\Sigma$ is given by $\sigma = \frac{1}{n} \operatorname{tr} \mathrm{II}$.

From now on, we only assume the weaker curvature assumption stated in Theorem \ref{thm general fenchel willmore'}.

We define $\Phi_r(x, z):=\exp_{x}\left(r z\right)$ for $(x, z)\in T^\perp \Sigma$ and $r>0$, and
\begin{equation*}
\begin{aligned}
& A_r:= \{ (x, z) \in T^\perp \Sigma: |z|< 1 \text{ and } d\left(q, \exp_x\left(r z\right)\right) \ge r |z| \text{ for all } q \in \Sigma \}.
\end{aligned}
\end{equation*}
\begin{lemma}\label{lem 2.2''}
For every $0 <\alpha<1$ and $r>0$,
$$
\{p \in M: \alpha r<d(x, p)<r \text { for all } x \in \Sigma\}
$$
is contained in $\left\{\Phi_r(x, z):(x, z) \in A_r \text { and }|z|>\alpha\right\}$.
\end{lemma}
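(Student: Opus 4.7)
The plan is to realize $p$ as the endpoint of the normal geodesic emanating from a nearest foot of perpendicular on $\Sigma$, and then read off that this foot, together with the appropriately scaled initial vector, automatically lies in $A_r$. Concretely, given $p$ with $\alpha r<d(x, p)<r$ for all $x\in\Sigma$, the compactness of the closed submanifold $\Sigma$ ensures that the continuous function $x\mapsto d(x, p)$ attains a minimum at some $x_0\in\Sigma$; setting $L:=d(x_0, p)$, the hypothesis on $p$ gives $\alpha r<L<r$. Let $\gamma:[0, L]\to M$ be a unit-speed minimizing geodesic from $x_0$ to $p$. Since $x_0$ is a critical point of $d(\cdot, p)|_\Sigma$, the first variation formula for arclength forces $\gamma'(0)\in T_{x_0}^\perp\Sigma$.

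Next, I would define $z:=(L/r)\gamma'(0)\in T_{x_0}^\perp\Sigma$, so that $|z|=L/r\in(\alpha, 1)$ and
$$\Phi_r(x_0, z)=\exp_{x_0}(rz)=\exp_{x_0}(L\,\gamma'(0))=\gamma(L)=p.$$
It then remains to verify that $(x_0, z)\in A_r$. The bound $|z|<1$ is immediate. For the second defining condition of $A_r$, the distance-minimizing property of $x_0$ yields, for every $q\in\Sigma$,
$$d\bigl(q, \Phi_r(x_0, z)\bigr)=d(q, p)\ge d(x_0, p)=L=r|z|,$$
which is precisely the required inequality. Since $|z|>\alpha$, this places $p$ in the claimed image set.

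There is essentially no obstacle in this argument: the set $A_r$ is defined exactly so that the foot-of-perpendicular construction produces admissible pairs, and the two ingredients needed are the compactness of $\Sigma$ (to guarantee that the infimum $d(\Sigma, p)$ is attained) together with the first variation formula (to ensure that the nearest-point geodesic starts off normal to $\Sigma$). No curvature hypothesis enters at this stage; the lemma is purely a statement about the reach of the normal exponential map away from $\Sigma$.
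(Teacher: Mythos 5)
Your proof is correct and follows essentially the same approach as the paper's: locate the foot of the perpendicular $x_0\in\Sigma$ via compactness, use the first variation formula to get that the minimizing geodesic starts normal to $\Sigma$, and then verify $(x_0,z)\in A_r$ directly from the minimality of $x_0$. The only cosmetic difference is that you work with the unit-speed geodesic on $[0,L]$ and rescale, whereas the paper parametrizes the same geodesic on $[0,r]$ at speed $|z|$; both yield the identical conclusion.
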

\begin{proof}
Fix a real number $0 <\alpha<1$ and a point $p \in M$ such that $\alpha r<d(x, p)<r$ for all $x \in \Sigma$.
Suppose the distance function $d(\cdot, p)$ attains its minimum at a point $\bar{x} \in \Sigma$.
Let $\bar{\gamma}:[0, r] \rightarrow M$ be a minimizing geodesic such that $\bar{\gamma}(0)=\bar{x}$ and $\bar{\gamma}(r)=p$.
Then $\bar z:=\bar\gamma'(0)\in T_{\bar x}^\perp \Sigma$ by the first variation formula. Clearly, $r|\bar z|=r\left|\bar{\gamma}^{\prime}(0)\right|=d(\bar{x}, p)\in(\alpha r, r)$, and
$\Phi_r(\bar{x}, \bar{z})=\exp_{\bar{x}}\left(r \bar{z}\right)=\exp_{\bar{\gamma}(0)}\left(r \bar{\gamma}^{\prime}(0)\right)=\bar{\gamma}(r)=p$.
For any $x\in \Sigma$, $d(x, p)\ge d(\bar x, p)= r|z|$, i.e. $(\bar x, \bar z)\in A_r$.
\end{proof}

\begin{lemma}\label{lem 2.3''}
For every $(\bar x, \bar z) \in A_r$, we have $1-r \langle \sigma(\bar x), \bar z\rangle \ge 0$.
\end{lemma}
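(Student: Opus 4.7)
The plan is to use the second variation of arc length. The defining condition of $A_r$, applied at the point $q = \bar x$, forces $d(\bar x, p) = r|\bar z|$, where $p := \Phi_r(\bar x, \bar z) = \exp_{\bar x}(r\bar z)$; combined with the $A_r$ condition for arbitrary $q \in \Sigma$, this says that $\bar x$ minimizes $d(\cdot, p)$ over $\Sigma$. The case $\bar z = 0$ gives the trivial bound $1 \ge 0$, so assume $\bar z \neq 0$ and set $L := r|\bar z|$. Then $\bar\gamma(t) := \exp_{\bar x}(t\bar z/|\bar z|)$ on $[0, L]$ is a minimizing unit-speed geodesic from $\bar x$ to $p$.

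Pick an orthonormal basis $\{e_i\}_{i=1}^{n}$ of $T_{\bar x}\Sigma$ and let $E_i(t)$ be its parallel transport along $\bar\gamma$; since $e_i \perp \bar z = L\,\bar\gamma'(0)$, parallel transport keeps $E_i(t) \perp \bar\gamma'(t)$. For each $i$, choose a curve $x_i(s)$ in $\Sigma$ with $x_i(0) = \bar x$, $\dot x_i(0) = e_i$, and build a smooth variation $\alpha_i$ of $\bar\gamma$ with $\alpha_i(s, 0) = x_i(s)$, $\alpha_i(s, L) = p$, and variation vector field $V_i(t) = (1 - t/L)E_i(t)$. Setting $L_i(s) := \mathrm{length}(\alpha_i(s, \cdot))$, the $A_r$ condition and the trivial inequality $L_i(s) \ge d(x_i(s), p)$ give $L_i(s) \ge L = L_i(0)$, whence $L_i''(0) \ge 0$. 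Because the endpoint $p$ is fixed at $t = L$, only the $t = 0$ boundary term contributes to the second variation, and its projection onto $\bar\gamma'(0)$ collapses to $-\langle \mathrm{II}(e_i, e_i), \bar\gamma'(0)\rangle$. Noting $\nabla_{\bar\gamma'}V_i = -E_i/L$ and $\langle \bar R(V_i, \bar\gamma')\bar\gamma', V_i\rangle = (1 - t/L)^2\langle \bar R(E_i, \bar\gamma')\bar\gamma', E_i\rangle$, the second variation formula becomes
\begin{equation*}
0 \le L_i''(0) = \frac{1}{L} - \int_0^L (1 - t/L)^2 \langle \bar R(E_i, \bar\gamma')\bar\gamma', E_i\rangle\, dt - \langle \mathrm{II}(e_i, e_i), \bar\gamma'(0)\rangle.
\end{equation*}

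Summing over $i = 1, \ldots, n$, the curvature integrand becomes $\overline{\mathrm{Ric}}_n(\bar\gamma'(t); \mathrm{span}\{E_i(t)\})$, which is non-negative: the hypothesis $\overline{\mathrm{Ric}}_k \ge 0$ of Theorem \ref{thm general fenchel willmore'} has $k \le n$ in every case, and the paper has already noted that this implies $\overline{\mathrm{Ric}}_l \ge 0$ for all $l \ge k$. Therefore $n/L \ge n\langle \sigma(\bar x), \bar\gamma'(0)\rangle$, and multiplying by $L/n = r|\bar z|/n$ and using $\bar\gamma'(0) = \bar z/|\bar z|$ gives $1 \ge r\langle \sigma(\bar x), \bar z\rangle$, as desired.

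The only delicate point is to confirm that $L_i''(0) \ge 0$ is legitimately obtained as the second derivative of the smooth real-valued function $s \mapsto L_i(s)$ at its minimum $s = 0$; no smoothness of the global distance function $d(\cdot, p)$ near $\bar x$ is needed, only the minimality of $L_i(0)$, which comes directly from the $A_r$ condition with $q = x_i(s)$. The cut locus and conjugate points therefore do not intervene.
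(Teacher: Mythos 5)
Your proof is correct and follows essentially the same approach as the paper: a second variation argument with the linear test vector fields $(1-t/L)E_i$ summed over an orthonormal frame, then invoking $\overline{\mathrm{Ric}}_n \ge 0$. The only cosmetic difference is that the paper works with the energy functional (showing $\bar\gamma$ is an energy minimizer via Cauchy--Schwarz and then applying the second variation of energy) while you work directly with arc length; these are equivalent once one checks, as you do, that the first variation of length vanishes because $\bar z \perp T_{\bar x}\Sigma$.
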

\begin{proof}
Let $\bar{\gamma}(s):=\exp_{\bar{x}}\left(s \bar{z}\right)$ for $s \in[0, r]$.
Let $\gamma:[0, r] \rightarrow M$ be an arbitrary smooth path satisfying $\gamma(0) \in \Sigma$ and $\gamma(r)=\bar{\gamma}(r)$. Since $(\bar{x}, \bar{z}) \in A_r$, we have
\begin{align*}
r \int_0^r\left|\gamma^{\prime}(s)\right|^2 d s \ge d(\gamma(0), \gamma(r))^2
=d\left(\gamma(0), \exp_{\bar{x}}\left(r \bar{z}\right)\right)^2
\ge r^2 |\bar{z}|^2
=r \int_0^r\left|\bar{\gamma}^{\prime}(s)\right|^2 d s.
\end{align*}

In other words, $\bar{\gamma}$ minimizes the energy functional $\int_0^r\left|\gamma^{\prime}(s)\right|^2 d s$ among all smooth paths $\gamma:[0, r] \rightarrow M$ satisfying $\gamma(0) \in \Sigma$ and $\gamma(r)=\bar{\gamma}(r)$. Let $Z$ be a smooth vector field along $\bar{\gamma}$ such that $Z(0) \in T_{\bar{x}} \Sigma$ and $Z(r)=0$.
By the second variation formula of energy, we obtain
\begin{align*}
-\left\langle \mathrm{II}(Z(0), Z(0)), \bar{\gamma}^{\prime}(0)\right\rangle +\int_0^r\left(\left|\bar{D}_s Z(s)\right|^2-\left\langle \bar{R}\left(\bar{\gamma}^{\prime}(s), Z(s)\right) Z(s), \bar{\gamma}^{\prime}(s) \right\rangle
\right) d s \ge 0.
\end{align*}
Take an orthonormal basis $\{e_i\}_{i=1}^n$ of $T_{\bar x}\Sigma$ and let $E_i$ be the parallel transport of $e_i$ along $\bar{\gamma}$. Applying the above inequality to the vector fields $Z_i(s):=(r-s) E_i(s)$ and summing from $i=1$ to $n$ gives
\begin{equation*}
nr-r^2\langle n \sigma(\bar x), \bar{z}\rangle
-\sum_{i=1}^n\int_0^r(r-s)^2\left\langle \bar{R}\left(\bar{\gamma}^{\prime}(s), E_i(s)\right) E_i(s), \bar{\gamma}^{\prime}(s) \right\rangle
d s \ge 0.
\end{equation*}
In view of the assumption $\overline{\mathrm{Ric}}_k \ge 0$, with $k$ as defined in \eqref{eq k}, which implies $\overline{\mathrm{Ric}}_n \ge 0$, the desired result follows.
\end{proof}

The following form of the Riccati comparison theorem can be found in, for example, \cite[Lemma 4.1]{BallmannRiccati}.
\begin{lemma}\label{lem riccati}
Let $q_1, q_2:(0, a] \rightarrow \mathbb{R}$ be smooth functions
such that
$$ q_1'(s)+q_1(s)^2 \le q_2'(s)+q_2(s)^2 $$
and both $q_i'(s)+q_i(s)^2$ extend smoothly to $[0, a]$.
If $\lim_{s \rightarrow 0^+} (q_2(s)-q_1(s)) =0$,
then $q_1(s) \le q_2(s)$.
If $q_1(a)=q_2(a)$, then $q_1=q_2$ on $(0, a]$.
\end{lemma}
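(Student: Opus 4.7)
My plan is to reduce the Riccati inequality to a first-order \emph{linear} differential inequality for the difference $w := q_2 - q_1$, and then exploit the monotonicity of a Wronskian-type quantity. A direct computation gives
\[
w'(s) + (q_1+q_2)(s)\, w(s) \;=\; \bigl(q_2'+q_2^2\bigr)(s) - \bigl(q_1'+q_1^2\bigr)(s) \;\ge\; 0.
\]
To linearize this, I would fix any $s_0 \in (0,a]$ and set $\phi_i(s) := \exp\!\bigl(\int_{s_0}^s q_i\bigr)$ and $\psi := \phi_1 \phi_2$. Then $\psi > 0$ on $(0,a]$, $\psi' = (q_1+q_2)\,\psi$, and
\[
(\psi w)'(s) \;=\; \psi(s)\bigl[w'(s) + (q_1+q_2)(s)\, w(s)\bigr] \;\ge\; 0,
\]
so $\psi w$ is non-decreasing on $(0, a]$.

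The main obstacle, and the reason the hypothesis on $q_i'+q_i^2$ is made, is to control the boundary behavior $\lim_{s\to 0^+} \psi(s) w(s)$, since the individual $q_i$ may blow up at $s=0$. I would argue that the smoothness of $f_i := q_i'+q_i^2$ on $[0,a]$ severely restricts such singularities: a leading-order matching in the Riccati equation $q_i'+q_i^2 = f_i$ forces $q_i(s)$ near $0$ to either extend smoothly or have the form $\tfrac{1}{s} + O(s)$, since any other leading behavior makes $f_i$ blow up. In either case, $\phi_i(s)$ is bounded or $O(s)$ near $0$, so $\psi$ admits a finite nonnegative limit at $0$. Combined with the assumed $w(s) \to 0$, this yields $\psi(s) w(s) \to 0$ as $s \to 0^+$.

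Combining the monotonicity of $\psi w$ with this vanishing limit gives $\psi w \ge 0$ on $(0,a]$, and since $\psi > 0$ this is equivalent to $q_1 \le q_2$. For the rigidity clause, $q_1(a) = q_2(a)$ yields $(\psi w)(a) = 0$; the monotonicity together with $\lim_{s\to 0^+}\psi w = 0$ then sandwiches $\psi w$ to be identically $0$ on $(0,a]$, hence $q_1 \equiv q_2$ there.
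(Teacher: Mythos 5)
The paper does not supply its own proof of this lemma; it simply cites Ballmann's lecture notes \cite[Lemma~4.1]{BallmannRiccati}. Your argument is the standard integrating-factor proof of Riccati comparison and is essentially correct: you set $w = q_2 - q_1$, observe $(\psi w)' = \psi\bigl(w' + (q_1+q_2)w\bigr) \ge 0$ for $\psi = \exp\bigl(\int_{s_0}^{s}(q_1+q_2)\bigr)$, and use monotonicity of $\psi w$ together with its vanishing limit at the left endpoint to conclude, with the rigidity clause following by sandwiching the monotone function $\psi w$ between two zeros.

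The one step that should be tightened is the assertion, phrased as ``leading-order matching,'' that smoothness of $f_i := q_i' + q_i^2$ on $[0,a]$ forces $q_i$ near $0$ to be either smooth or of the form $\tfrac1s + O(s)$; as written this does not a priori exclude oscillatory singular behavior. A clean way to close it: since $q_i'(s) = f_i(s) - q_i(s)^2 \le C := \sup_{[0,a]} f_i$, the map $s\mapsto q_i(s) - Cs$ is non-increasing, so $\lim_{s\to 0^+} q_i(s)$ exists in $(-\infty,+\infty]$. The value $-\infty$ is impossible, since it would give $q_i' = f_i - q_i^2 \to -\infty$, hence $q_i$ decreasing near $0$, contradicting $q_i(s)\to-\infty$ as $s$ decreases to $0$. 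A finite limit bootstraps through $q_i' = f_i - q_i^2$ to a smooth extension. If the limit is $+\infty$, then $v := 1/q_i$ satisfies $v' = 1 - f_i v^2 \to 1$ with $v\to 0$, so $v(s) = s + O(s^3)$ and $q_i(s) = 1/s + O(s)$. Only with this dichotomy in hand does each $\phi_i$ admit a finite, non-negative one-sided limit at $0$, and hence $\psi w\to 0$, which is the crux of your boundary estimate. With that clarification the proof is complete.
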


\begin{lemma}\label{lem 2.4''}
Assume that $(x, z) \in A_r$. Then the function
$s \mapsto \frac{|\det D\Phi_s(x, z)|}{s^m(1-s\langle \sigma(x), z\rangle)^n}$
is monotone decreasing and
\begin{align*}
|\det D\Phi_s(x, z)|\le {s^m(1-s\langle \sigma, z\rangle)^n}.
\end{align*}
for $s \in(0, r)$.
\end{lemma}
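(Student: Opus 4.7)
The plan is to bound the logarithmic derivative of $|\det D\Phi_s(x,z)|$ from above by that of the comparison function $s^m(1-s\langle\sigma(x),z\rangle)^n$ via the scalar Riccati comparison (Lemma~\ref{lem riccati}); the monotonicity of the ratio and the pointwise bound will then follow by integration after verifying the limiting behavior at $s\to 0^+$.

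Write $\gamma(s):=\exp_x(sz)$ and $\mathcal{J}(s):=D\Phi_s(x,z)$, regarded as a matrix of Jacobi fields along $\gamma$. Because $(x,z)\in A_r$, the geodesic $\gamma$ minimizes the distance from $\Sigma$ to $\gamma(r)$, so a standard no-focal-point argument makes $\mathcal{J}(s)$ invertible for $s\in(0,r)$, and the endomorphism $S(s):=\mathcal{J}'(s)\mathcal{J}(s)^{-1}$ is symmetric (by the Lagrangian property of the initial data) and satisfies the matrix Riccati equation $S'+S^2+R=0$ with $R(V)=\bar R(V,\gamma')\gamma'$. The radial Jacobi field $s\gamma'/|z|$ contributes a factor $s$, so $|\det D\Phi_s|=s\,|\det \hat{\mathcal{J}}(s)|$ with $\hat{\mathcal{J}}$ the restriction to $\gamma'^\perp$, and hence $(\log|\det D\Phi_s|)'(s)=1/s+\operatorname{tr}\hat S(s)$ where $\hat S:=S|_{\gamma'^\perp}$.

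The heart of the argument is a two-piece scalar Riccati estimate. Fix orthonormal bases $\{e_i\}_{i=1}^n$ of $T_x\Sigma$ and $\{\nu_j\}_{j=2}^m$ of $T_x^\perp\Sigma\cap z^\perp$, and let $V_i(s),W_j(s)$ be their parallel transports along $\gamma$; together they form an orthonormal basis of $\gamma'(s)^\perp$ at every $s$, so $\operatorname{tr}\hat S=Q_T+Q_N$ with $Q_T:=\sum_i\langle\hat S V_i,V_i\rangle$ and $Q_N:=\sum_j\langle\hat S W_j,W_j\rangle$. Differentiating, using the Riccati equation together with the symmetry of $\hat S$ and the two Cauchy-Schwarz bounds $|\hat S V|^2\ge\langle\hat S V,V\rangle^2$ (for unit $V$) and $\sum_i q_i^2\ge(\sum_i q_i)^2/n$, one obtains
\[
Q_T'(s)=-\sum_i|\hat S V_i|^2-|z|^2\,\overline{\mathrm{Ric}}_n\bigl(z/|z|,\operatorname{span}(V_i)\bigr)\le -\frac{Q_T(s)^2}{n},
\]
where the inequality uses $\overline{\mathrm{Ric}}_n\ge 0$, which is implied by the hypothesis since $k\le n$. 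An entirely analogous computation on the $(m-1)$-plane spanned by the $W_j$ gives $Q_N'\le -Q_N^2/(m-1)$ using $\overline{\mathrm{Ric}}_{m-1}\ge 0$ (vacuous when $m=1$).

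Comparing $Q_T/n$ and $Q_N/(m-1)$ with the explicit solutions $-\langle\sigma(x),z\rangle/(1-s\langle\sigma(x),z\rangle)$ and $1/s$ of the equality Riccati $y'=-y^2$, the matching condition of Lemma~\ref{lem riccati} is verified as follows: $Q_T(s)/n\to-\langle\sigma(x),z\rangle$ as $s\to 0^+$ since $\hat S V_i(s)\to-A_z e_i$ and $\operatorname{tr}A_z=n\langle\sigma(x),z\rangle$, while the standard Jacobi expansion $J_j^\perp(s)=s\nu_j+O(s^3)$ gives $Q_N(s)/(m-1)-1/s=O(s)$. Applying Lemma~\ref{lem riccati} and summing yields $\operatorname{tr}\hat S\le(m-1)/s-n\langle\sigma(x),z\rangle/(1-s\langle\sigma(x),z\rangle)$, whence $\frac{d}{ds}\log\frac{|\det D\Phi_s|}{s^m(1-s\langle\sigma(x),z\rangle)^n}\le 0$, and since the ratio tends to $1$ as $s\to 0^+$, both conclusions follow. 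The delicate point is the two-piece grouping of the Cauchy-Schwarz inequalities: lumping all $(n+m-1)$ perpendicular directions together would produce the full Ricci curvature in direction $\gamma'$, strictly stronger than the assumed $\overline{\mathrm{Ric}}_{\min(n,m-1)}\ge 0$.
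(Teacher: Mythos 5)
Your proposal is correct and follows essentially the same strategy as the paper: reduce to a two-piece scalar Riccati comparison (tangential and normal partial traces of the shape operator along $\gamma$), invoke the Cauchy-Schwarz inequality separately on each block, and identify the relevant curvature terms with $n$- and $(m-1)$-dimensional intermediate Ricci curvatures, which is exactly what the hypothesis $\overline{\mathrm{Ric}}_k\ge 0$ with $k$ as in \eqref{eq k} provides. The one (cosmetic) difference is in the bookkeeping of the radial direction. You factor out the radial Jacobi field $s\,\gamma'/|z|$ explicitly, so that $|\det D\Phi_s| = s\,|\det\hat{\mathcal J}(s)|$ with $\hat{\mathcal J}$ the $(n+m-1)\times(n+m-1)$ block on $\gamma'^\perp$, and then sum the normal partial trace $Q_N$ over only the $m-1$ transverse normal directions $W_2,\dots,W_m$. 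The paper instead keeps the radial direction inside an $m\times m$ normal block, defining $q_m=\frac{1}{m}\sum_{\alpha=n+1}^{n+m}Q_{\alpha\alpha}$; there the radial entry contributes $1/s$ exactly and drops out of the curvature sum since $\bar\gamma'$ is parallel to $E_{n+m}$, so $\langle\bar R(E_{n+m},\bar\gamma')\bar\gamma',E_{n+m}\rangle=0$. These two partitions are interchangeable: with $q_N=Q_N/(m-1)$ one checks that $q_m\le 1/s$ is equivalent to $q_N\le 1/s$, both Cauchy-Schwarz steps are tight in the same degenerate situation, and both produce the same bound $\tfrac{d}{ds}\log|\det D\Phi_s|\le m/s - n\langle\sigma,z\rangle/(1-s\langle\sigma,z\rangle)$. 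Your version is arguably slightly cleaner in that it isolates the trivially-behaving radial direction from the start; the paper's version has the virtue that $\det P(s)$ is literally the Jacobian without an extra factor to track. Your remark at the end, that lumping all $n+m-1$ transverse directions into a single partial trace would require full non-negative Ricci rather than $\overline{\mathrm{Ric}}_{\min(n,m-1)}\ge 0$, is precisely the point the paper is exploiting.
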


\begin{proof}
The estimate for the Jacobian determinant actually follows from \cite[Corollary 3.3.1]{HeintzeKarcher1978} (or \cite[Corollary 3.3.2]{HeintzeKarcher1978} in the case when $m=1$). See also \cite{Chahine2019}. But since we also need the monotonicity formula, we will carry out the complete proof here.

Take any $r>0$ and $(\bar{x}, \bar{z})\in A_r$.
Define the geodesic
$$
\bar{\gamma}(s)=\exp_{\bar{x}}(s\bar{z}), \qquad s\in[0, r].
$$

Choose an orthonormal basis $\{e_1, \dots, e_n\}$ of $T_{\bar{x}}\Sigma$ such that the matrix $\left(-\langle \mathrm{II}(e_i, e_j), \bar{z}\rangle\right)=\Lambda=\mathrm{diag}(\lambda_1, \cdots, \lambda_n)$ is diagonal.
Select an orthonormal frame $\{e_{n+1}, \dots, e_{n+m}\}$ of $T^\perp\Sigma$ near $\bar{x}$ with $\langle\bar D_{e_i}e_\alpha, e_\beta\rangle=0$ at $\bar{x}$ for all $1\le i\le n$ and $n+1\le\alpha, \beta\le n+m$, such that $e_{n+m}$ is parallel to $\bar z$ and $\{e_1, \cdots, e_{n+m}\}$ is positively oriented.
Let $E_j(s)$ be the parallel transport of $e_j$ along $\bar{\gamma}$ for $j=1, \cdots, n+m$.

For each $1\le i\le n$, we define the Jacobi field $X_i(s)$ along $\bar{\gamma}$ by
$$ X_i(0)=e_i \text{ and }X_i'(0)
=\lambda_i e_i.
$$

For each $n+1 \le \alpha \le n+m$, we denote by $X_\alpha(s)$ the Jacobi field along $\bar{\gamma}$ satisfying
\begin{equation*}
X_\alpha(0)=0\text{ and }\bar{D}_s X_\alpha(0)=e_\alpha.
\end{equation*}
Define the matrix $P(s)$ by setting $P_{ij}(s)=\langle E_j(s), X_i(s)\rangle$ and $S_{ij}(s)=\langle \bar{R} (E_i(s), \bar{\gamma}'(s)) \bar{\gamma}'(s), E_j(s)\rangle$. Then $P$ satisfies the Jacobi equation $P''(s)=-S(s)P(s)$.

From the definition of $A_r$, there is no focal point of $\Sigma$ along $\bar\gamma(s)$ for $0<s<r$. In particular, $\{X_j\}_{j=1}^{n+m}$ is linearly independent and $P(s)$ is invertible. Define $Q(s)=P'(s)P^{-1}(s)$, then $Q(s)$ is symmetric and satisfies the Riccati equation
$$
Q'(s)+Q(s)^2=-S(s).
$$

We remark that $Q(s)$ can be interpreted as the matrix representation of $|\bar z| B(s)$ with respect to $\{E_j\}_{j=1}^{n+m}$, where $B(s)=\bar D^2 \rho$ is the shape operator of $\Sigma_s$ at $\bar\gamma(s)$ and $\Sigma_s$ is the hypersurface whose distance $\rho$ from $\Sigma$ is $|\bar z|s$.

Define the normalized partial traces
$$
q_n(s)=\frac{1}{n}\sum_{i=1}^n Q_{ii}(s), \quad
q_m(s)=\frac{1}{m}\sum_{\alpha=n+1}^{n+m}Q_{\alpha\alpha}(s).
$$
By the Cauchy-Schwarz inequality, they satisfy the scalar Riccati inequality
$$
q_n'(s)+ q_n(s)^2\le
q_n'(s)+\frac{1}{n}\sum_{i=1}^{n}\left(Q(s)^2\right)_{ii} = -\frac{1}{n}\sum_{i=1}^{n}\left\langle\bar{R}\left(E_i(s), \bar{\gamma}^{\prime}(s)\right) \bar{\gamma}^{\prime}(s), E_i(s)\right\rangle\le0
$$
and
$$
q_m^{\prime}(s)+ q_m(s)^2 \le q_m^{\prime}(s)+\frac{1}{m}\sum_{\alpha =n+1}^{ n+m}(Q(s)^2)_{ii} = -\frac{1}{m}\sum_{\alpha =n+1}^{n+m}\langle
\bar{R}(E_\alpha (s), \bar{\gamma}^{\prime}(s))\bar{\gamma}^{\prime}(s), E_\alpha (s) \rangle \le0.
$$
More precisely, the first inequality comes from
\begin{equation}\label{ineq cs}
\begin{aligned}
\frac{1}{n} \sum_{i=1}^n\left(Q(s)^2\right)_{i i}=\frac{1}{n} \sum_{i=1}^n Q_{i i}(s)^2+\frac{1}{n} \sum_{i=1}^n \sum_{j \ne i} Q_{i j}(s)^2 \ge & \frac{1}{n} \sum_{i=1}^n Q_{i i}(s)^2\\
\ge& \left(\frac{1}{n} \sum_{i=1}^n Q_{i i}(s)\right)^2\\
=& q_n(s)^2.
\end{aligned}
\end{equation}
Notice also that the curvature assumption $\overline{\mathrm{Ric}}_k\ge0$ with $k$ given by \eqref{eq k} implies both
$$\sum_{i=1}^n\left\langle\bar{R}\left(E_i(s), \bar{\gamma}^{\prime}(s)\right) \bar{\gamma}^{\prime}(s), E_i(s)\right\rangle\ge0$$ and
$$\sum_{\alpha=n+1}^{n+m}\left\langle\bar{R}\left(E_\alpha(s), \bar{\gamma}^{\prime}(s)\right) \bar{\gamma}^{\prime}(s), E_\alpha(s)\right\rangle=
\sum_{\alpha=n+1}^{n+m-1}\left\langle\bar{R}\left(E_\alpha(s), \bar{\gamma}^{\prime}(s)\right) \bar{\gamma}^{\prime}(s), E_\alpha(s)\right\rangle\ge0. $$

As shown in \cite[Equation 2.4]{Chahine2019}, by computing the Taylor expansion of $\bar D^2 \rho$, where $\rho$ is the distance from $\Sigma$, it holds that
\begin{align}\label{Q0}
Q(s)=
\begin{bmatrix}
\Lambda+O(s) & O(s)\\
O(s) & \frac{1}{s}I_m+O(s)
\end{bmatrix}
\end{align}
as $s\to0^+$.

From this, we have
\begin{align*}
q_n(s)=-\langle \sigma, \bar z\rangle +O(s)\quad \text{ and } q_m(s)=\frac{1}{s} +O(s)
\end{align*}
as $s\to0^+$.

Let $\bar q_n(s)=-\frac{\langle \sigma, \bar z\rangle }{1-s\langle \sigma, \bar z\rangle}$ and $\bar q_m(s)=\frac{1}{s}$, then they both satisfy the scalar Riccati equation $f'(s)+f(s)^2=0$. In particular,
\begin{align*}
q_n^{\prime}(s)+q_n(s)^2 \le \bar q_n^{\prime}(s)+\bar q_n(s)^2\quad \text{and}\quad q_m^{\prime}(s)+q_m(s)^2 \le \bar q_m^{\prime}(s)+\bar q_m(s)^2.
\end{align*}

By Lemma \ref{lem riccati}, we conclude that
\begin{align*}
q_n(s) \le\bar q_n(s)\quad \text{and}\quad q_m(s) \le\bar q_m(s).
\end{align*}
We then have
$$
\frac{d}{d s} \log (\det P(s))=\operatorname{tr} Q(s)
=n q_n(s) +m q_m(s) \le n\bar q_n(s) +m \bar q_m(s)=\frac{m}{s}-\frac{n\langle\sigma, \bar{z}\rangle}{1-s\langle\sigma, \bar{z}\rangle}.
$$

From the observation that $\det D\Phi_s =\det P(s)$, we deduce that
\begin{align*}
\frac{|\det D\Phi_s|}{s^m(1-s\langle \sigma, \bar z\rangle)^n}
\end{align*}
is non-increasing, and so by the initial condition
\begin{equation*}
P(s)=\begin{bmatrix}
I_n+ O(s) & O(s) \\
O(s) & s I_m+O(s^2)
\end{bmatrix},
\end{equation*}
we can then integrate this inequality to get
\begin{align*}
|\det D\Phi_s|\le {s^m(1-s\langle \sigma, \bar z\rangle)^n}.
\end{align*}
\end{proof}
We are now ready to prove Theorem \ref{thm general fenchel willmore'}.

\begin{proof}[Proof of Theorem \ref{thm general fenchel willmore'}]
Let $\alpha\in(0, 1)$ and $r>0$. Define $\Sigma^+=\{x\in \Sigma: \sigma(x)\ne 0\}$ and $\Sigma^0=\{x\in \Sigma: \sigma(x) =0\}$.

By Lemma \ref{lem 2.2''},
\begin{equation}\label{ineq est I0 I+}
\begin{aligned}
& |\{p \in M: \alpha r<d(x, p)<r \text { for all } x \in \Sigma\} | \\
\le & \int_{\Sigma} \int_{Z_\alpha}\left|\det D\Phi_r(x, z)\right| 1_{A_r}(x, z) dz d \operatorname{vol}_{\Sigma} \\
=& \int_{\Sigma^0} \int_{Z_\alpha}\left|\det D\Phi_r(x, z)\right| 1_{A_r}(x, z) dz d \operatorname{vol}_{\Sigma}
+ \int_{\Sigma^+} \int_{Z_\alpha}\left|\det D\Phi_r(x, z)\right| 1_{A_r}(x, z) dz d \operatorname{vol}_{\Sigma}\\
=& I^0(\alpha, r)+I^+(\alpha, r),
\end{aligned}
\end{equation}
where $Z_\alpha=\{z\in T_x^\perp \Sigma: \alpha<|z|<1\}$.

Note that
\begin{equation*}
\lim_{r\to\infty}\frac{1}{r^{n+m}}|\{p \in M: \alpha r<d(x, p)<r \text { for all } x \in \Sigma\} |=\theta\left|\mathbb{B}^{n+m}\right|\left(1-\alpha^{n+m}\right)
\end{equation*}
and hence
\begin{equation}\label{avr}
\begin{split}
\lim_{\alpha\to1^-}\lim_{r\to\infty}\frac{1}{1-\alpha}\cdot\frac{1}{r^{n+m}}|\{p \in M: \alpha r<d(x, p)<r \text { for all } x \in \Sigma\} |=(n+m)\theta\left|\mathbb{B}^{n+m}\right|.
\end{split}
\end{equation}
Let us assume $m \ge 3$ at the moment.

We first consider the integral $I^+(\alpha, r)$.
As explained in the Introduction (Section \ref{sec intro}), on $\Sigma^{+}$ we decompose $T_x^{\perp} \Sigma=\widetilde{T}_x^{\perp} \Sigma \oplus \operatorname{span}(\sigma)$, so any normal vector can be written as $y+t \frac{\sigma}{|\sigma|}$ with $y \in \widetilde{T}_x^{\perp} \Sigma$, and we identify it with $(y, t)$.
Lemma \ref{lem 2.3''} implies that
$t$ satisfies $-1<t \le \frac{1}{r|\sigma(x)|}$ for $(x, y, t)\in A_r$, so Lemmas \ref{lem 2.2''} and \ref{lem 2.4''} give
\begin{equation}\label{ineq I+}
\begin{aligned}
I^+(\alpha, r) & \le \int_{\Sigma^+} \int_{-1}^{\frac{1}{r|\sigma(x)|}} \int_{Y_{\alpha, x, t}}\left|\det D \Phi_r(x, y, t)\right| 1_{A_r}(x, y, t) d y d t d \operatorname{vol}_{\Sigma} (x)\\
& \le \int_{\Sigma^+} \int_{-1}^{\frac{1}{r|\sigma|}} \int_{Y_{\alpha, x, t}} r^m(1-r|\sigma(x)| t)^n d y d t d \operatorname{vol}_{\Sigma},
\end{aligned}
\end{equation}
where $Y_{\alpha, x, t}:=\left\{y \in \widetilde{T}_x^{\perp} \Sigma: \alpha^2<|y|^2+t^2<1\right\}$.

For $|t|<1$, we have the limit
$$
\lim_{\alpha \rightarrow 1^{-}} \frac{\left(1-t^2\right)^{\frac{m-1}{2}} -\left(\alpha^2-t^2\right)^{\frac{m-1}{2}} }{1-\alpha}=(m-1) \left(1-t^2\right)^{\frac{m-3}{2}}.
$$
Therefore, for $(x, y, t) \in A_r$, as $\alpha\to 1^-$ and $r\to\infty$,
\begin{align*}
\left|Y_{\alpha, x, t}\right| & =\left|\mathbb{B}^{m-1}\right|\left(\left(1 -t^2\right)_{+}^{\frac{m-1}{2}}-\left(\alpha^2 -t^2\right)_{+}^{\frac{m-1}{2}}\right) \\
& =(m-1)\left|\mathbb{B}^{m-1}\right|\left(\left(1-t^2\right)^{\frac{m-3}{2}}(1-\alpha)+O\left((1-\alpha)^2\right)\right).
\end{align*}
Here $O\bigl((1-\alpha)^p\bigr)$ is a quantity whose absolute value is bounded by $C (1-\alpha)^p$ for some constant $C > 0$ independent of $\alpha$, when $\alpha$ is sufficiently close to $1$.

Hence for $r$ large enough and $\alpha$ close enough to $1$, the fiber integral in \eqref{ineq I+} satisfies
\begin{equation}\label{ineq I+2}
\begin{aligned}
& \int_{-1}^{\frac{1}{r|\sigma|}} \int_{Y_{\alpha, x, t}} r^m(1-r|\sigma| t)^n d y d t\\
=& (m-1)\left|\mathbb{B}^{m-1}\right|r^m
\int_{-1}^{\frac{1}{r|\sigma|}} \left(\left(1-t^2\right)^{\frac{m-3}{2}}(1-\alpha)+O\left((1-\alpha)^2\right)\right)(1-r|\sigma| t)^n d t.
\end{aligned}
\end{equation}

Now, recall the identity \cite[Eqn. (13.45), (13.49)]{ArfkenWeberHarris2013}
\begin{equation}\label{eq euler}
\int_0^1 s^{p-1}(1-s)^{q-1} d s=\frac{\Gamma(p) \Gamma(q)}{\Gamma(p+q)},
\end{equation}
where $p, q>0$ and $\Gamma(s)=\int_0^{\infty} t^{s-1} e^{-t} d t$ is the gamma function.

For $r$ large enough, we consider
\begin{equation}\label{ineq I+3}
\begin{aligned}
\int_{-1}^{\frac{1}{r|\sigma|}} \left(1-t^2\right)^{\frac{m-3}{2}} (1-r|\sigma| t)^n d t
=& r^n|\sigma|^n \int_{-1}^0(-t)^n\left(1-t^2\right)^{\frac{m-3}{2}} d t+O\left(r^{n-1}\right)\\
=& \frac{\Gamma\left(\frac{n+1}{2}\right) \Gamma\left(\frac{m-1}{2}\right)}{2\Gamma\left(\frac{n+m}{2}\right)} r^n|\sigma|^n+O\left(r^{n-1}\right).
\end{aligned}
\end{equation}
(Here $O\bigl(r^p\bigr)$ is a quantity whose absolute value is bounded by $C r^p$ for some constant $C > 0$ independent of $r$, when $r$ is large enough.)

On the other hand,
\begin{equation}\label{ineq I+4}
\int_{-1}^{\frac{1}{r|\sigma|}}(1-r|\sigma| t)^n d t=
\frac{(1+r|\sigma|)^{n+1}}{(n+1) r|\sigma|}=O(r^n).
\end{equation}

For sufficiently large $r$ and $\alpha$ close to $1$, \eqref{ineq I+}, \eqref{ineq I+2}, \eqref{ineq I+3} and \eqref{ineq I+4}, together with the identity $\left|\mathbb{B}^d\right| = \frac{\pi^{\frac{d}{2}}}{\Gamma\left(\frac{d}{2} + 1\right)}$ and the relation $\Gamma(z + 1) = z\, \Gamma(z)$, yield
\begin{equation*}
\begin{aligned}
\frac{1}{r^{m+n}}I^{+}(\alpha, r)
\le& (1-\alpha)(m-1)\left|\mathbb{B}^{m-1}\right|\frac{\Gamma\left(\frac{n+1}{2}\right) \Gamma\left(\frac{m-1}{2}\right)}{2 \Gamma\left(\frac{n+m}{2}\right)}\int_{\Sigma^+}|\sigma|^n+O((1-\alpha)^2)\\
=& (1-\alpha)\pi^{\frac{m-1}{2}} \frac{\Gamma\left(\frac{n+1}{2}\right)}{\Gamma\left(\frac{n+m}{2}\right)} \int_{\Sigma^+} |\sigma|^n +O((1-\alpha)^2).
\end{aligned}
\end{equation*}
Therefore
\begin{equation}\label{ineq est I+}
\limsup_{\alpha \rightarrow 1^{-}} \limsup_{r \rightarrow \infty} \frac{1}{1-\alpha} \cdot \frac{1}{r^{n+m}} I^{+}(\alpha, r) \le
\pi^{\frac{m-1}{2}} \frac{\Gamma\left(\frac{n+1}{2}\right)}{\Gamma\left(\frac{n+m}{2}\right)} \int_{\Sigma}|\sigma|^n.
\end{equation}
We now turn to $I^0(\alpha, r)$. In this case, by Lemma \ref{lem 2.4''} we have
\begin{equation*}
\begin{aligned}
I^0(\alpha, r)=& \int_{\Sigma^0} \int_{Z_\alpha}\left|\det D\Phi_r(x, z)\right| 1_{A_r}(x, z) d z d \operatorname{vol}_{\Sigma}(x)\\
\le& \int_{\Sigma^0} \int_{Z_\alpha}r^m d z d \operatorname{vol}_{\Sigma}(x)\\
=& |\mathbb B^m|(1-\alpha^m)r^m |\Sigma^0|,
\end{aligned}
\end{equation*}
and so
$$
\lim_{\alpha \rightarrow 1^{-}} \lim_{r \rightarrow \infty} \frac{1}{1-\alpha} \cdot \frac{1}{r^{n+m}} I^0(\alpha, r)=0.
$$
Combining this with \eqref{ineq est I0 I+}, \eqref{avr} and \eqref{ineq est I+}, we obtain
$$
\theta(n+m)\left|\mathbb{B}^{n+m}\right| \le \pi^{\frac{m-1}{2}} \frac{\Gamma\left(\frac{n+1}{2}\right)}{\Gamma\left(\frac{n+m}{2}\right)} \int_{\Sigma^{+}}|\sigma|^n.
$$
This inequality is of the form $\int_{\Sigma}|\sigma|^n \ge \theta C$ for some positive constant $C$.

Using $|\mathbb B^{d}|=\frac{\pi^{\frac{d}{2}}}{\Gamma\left(\tfrac d2+1\right)}$ and
$|\mathbb S^{d-1}|=d|\mathbb B^{d}|=\frac{2\pi^{\frac{d}{2}}}{\Gamma\left(\tfrac d2\right)}$, we compute $C$ to be
$$
\frac{(n+m) |\mathbb B^{n+m}|}{
\pi^{ \frac{m-1}{2}} \Gamma\bigl(\tfrac{n+1}{2}\bigr)/ \Gamma\bigl(\tfrac{n+m}{2}\bigr)}
=(n+m) \frac{\pi^{\frac{n+m}{2}}}{\pi^{\frac{m-1}{2}}} \frac{\Gamma(\tfrac{n+m}{2})}{ \Gamma(\tfrac{n+m}{2}+1)\Gamma(\tfrac{n+1}{2})}
=\frac{2\pi^{\frac{n+1}{2}}}{ \Gamma(\tfrac{n+1}{2})}
=|\mathbb S^{n}|.
$$
From this, we arrive at
\begin{align*}
\int_\Sigma |\sigma|^n\ge \theta|\mathbb S^n|.
\end{align*}
Let us now consider the case where $m=1$ or $2$.

In the case where $m=2$, note that the condition in Theorem \ref{thm general fenchel willmore'} is that the sectional curvature of $M$ is non-negative. By taking the product of $M$ with $\mathbb{R}$, we can view $\Sigma$ as a codimension $3$ submanifold. Observe that the product manifold $M \times \mathbb{R}$ still has non-negative sectional curvature, while both $\theta$ and $\int_{\Sigma}|\sigma|^n$ remain invariant. Consequently, the inequality $\int_\Sigma |\sigma|^n \ge \theta|\mathbb S^{n}|$ continues to hold.

The remaining case is when $m=1$. Note that it is no longer true that the condition $\overline{\operatorname{Ric}}_n \ge 0$ is preserved in the product $M \times \mathbb{R}^2$. Nevertheless, the original argument remains valid after a minor modification. In this case, \eqref{ineq est I0 I+} and \eqref{avr} remain the same. We can modify \eqref{ineq I+} as follow:
\begin{equation*}\label{ineq I+'}
\begin{aligned}
I^{+}(\alpha, r) & \le \int_{\Sigma^{+}} \int_{T_{\alpha, r, x}} \left|\det D \Phi_r(x, t)\right| 1_{A_r}(x, t)\, d t\, d \operatorname{vol}_{\Sigma}(x) \\
& \le \int_{\Sigma^{+}} \int_{T_{\alpha, r, x}} r(1-r|\sigma(x)| t)^n\, d t\, d \operatorname{vol}_{\Sigma}(x),
\end{aligned}
\end{equation*}
where $T_{\alpha, r, x}=\{t\in \mathbb R: -1<t<\frac{1}{r|\sigma(x)|}, \alpha<|t|<1\}$.

For fixed $x\in \Sigma^+$, and for $r$ sufficiently large and $\alpha$ sufficiently close to $1$, consider the fiber integral
\begin{align*}
\int_{T_{\alpha, r, x}} r(1-r|\sigma(x)| t)^n d t
=& r\int_{-1}^{-\alpha} (1-r|\sigma(x)| t)^n d t\\
=& r^{n+1}|\sigma(x)|^n \int_\alpha^1 t^n d t+O\left(r^n\right)\\
=& \frac{r^{n+1}|\sigma(x)|^n}{n+1}\left(1-\alpha^{n+1}\right)+O\left(r^n\right).
\end{align*}
Dividing both sides by $r^{n+1}$ and $1-\alpha$, we then obtain the following inequality as a replacement for \eqref{ineq est I+}:
\begin{equation*}\label{ineq est I+'}
\limsup_{\alpha \rightarrow 1^{-}} \limsup_{r \rightarrow \infty} \frac{1}{1-\alpha} \cdot \frac{1}{r^{n+1}} I^{+}(\alpha, r) \le
\int_{\Sigma^+}|\sigma|^n.
\end{equation*}
The same limiting relation remains valid when $m=1$:
$$
\lim_{\alpha \rightarrow 1^{-}} \lim_{r \rightarrow \infty} \frac{1}{1-\alpha} \cdot \frac{1}{r^{n+1}} I^0(\alpha, r)=0.
$$
We can proceed as before to obtain
\begin{align*}
\int_{\Sigma}|\sigma|^n\ge \theta(n+1)\left|\mathbb B^{n+1}\right|=\theta |\mathbb S^n|.
\end{align*}
\end{proof}
\section{Equality case}\label{sec equality}

In this section, we analyze the equality case in Theorem \ref{thm general fenchel willmore'}. First of all, it is easy to see that the inequality is sharp. For example, take $\Sigma$ to be a closed $n$-dimensional manifold with positive curvature and define the conical metric
$$g = dr^2 + \left(\frac{r}{r_0} \right)^2 g_\Sigma$$
on $[r_0, \infty) \times \Sigma$, where $r_0 = \left(\frac{|\Sigma|}{\theta |\mathbb{S}^n|} \right)^{\frac{1}{n}}$.
By gluing with a suitable compact set along the boundary, the metric can be extended to a complete $n+1$ dimensional manifold $M$ with non-negative curvature. One then finds that the asymptotic volume ratio is $\theta$, and $\int_\Sigma |\sigma|^n = \theta |\mathbb{S}^n|$. Taking the product of $M$ with $\mathbb{R}^{m-1}$ then yields a codimension-$m$ submanifold with the same integral $\int_\Sigma |\sigma|^n = \theta |\mathbb{S}^n|$.

In the case where $n = 1$, we cannot even expect $\Sigma$ to have constant $|\sigma|$, since any closed curve in $\mathbb{R}^{m+1}$ that lies on a two-dimensional plane and is convex in that plane satisfies $\int_\Sigma|\sigma|=2\pi$ when regarded as a closed curve in $\mathbb{R}^{m+1}$. Indeed, this condition precisely characterizes the equality case of the classical Fenchel inequality.

To analyze the equality case in Theorem \ref{thm general fenchel willmore'}, from now on we are going to assume that the equality $\int_\Sigma|\sigma|^n =\theta|\mathbb S^n|$ holds.

\begin{lemma}\label{lem no focal}
For all $x \in \Sigma^+$, $z \in {T}_x^{\perp} \Sigma$ such that $|z|=1$ and $\langle z, \sigma(x)\rangle \le 0$,
the geodesic $\gamma(s)=\exp_x\left(sz\right)$ minimizes the distance from $\Sigma$ for all $s \ge0$. In particular, no focal point of $\Sigma$ occurs along $\gamma(s)$ for $s>0$ and the normal exponential map when restricted to $\{(x, z)\in T^\perp \Sigma^+: \langle z, \sigma(x)\rangle \le 0\}$ is injective, and hence is a diffeomorphism onto its image.
\end{lemma}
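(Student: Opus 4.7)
The plan is to establish the distance-minimization claim by contradiction, extracting a strict inequality from the proof of Theorem \ref{thm general fenchel willmore'}; the remaining conclusions (no focal points and injectivity) will then follow by standard geometric arguments.

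Suppose for contradiction that there exist $x_0 \in \Sigma^+$, a unit $z_0 \in T_{x_0}^\perp \Sigma$ with $\langle z_0, \sigma(x_0)\rangle \le 0$, and $s_0 > 0$ with $d(\exp_{x_0}(s_0 z_0), \Sigma) \le s_0 - \delta_0$ for some $\delta_0>0$. By continuity together with the $1$-Lipschitz property of $s \mapsto d(\exp_x(sw), \Sigma)$, this gap persists on a neighborhood $V$ of $(x_0, z_0)$ in the unit normal bundle of $\Sigma^+$, giving $d(\exp_x(sw), \Sigma) \le s - \delta$ for all $(x, w) \in V$ and $s \ge s_0$, with some uniform $\delta > 0$. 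Hence $(x, z) \notin A_r$ whenever $\hat z := z/|z| \in V_x$, $|z| < 1$ and $r|z| \ge s_0$. I would then revisit the upper bound
\begin{equation*}
I^+(\alpha, r) \le \int_{\Sigma^+}\int_{-1}^{1/(r|\sigma|)}\int_{Y_{\alpha, x, t}} r^m(1 - r|\sigma|t)^n \, dy\, dt\, d\operatorname{vol}_\Sigma
\end{equation*}
from the proof of Theorem \ref{thm general fenchel willmore'}, which was obtained by dropping the indicator $1_{A_r}$ from the integrand. Fixing $\epsilon > 0$ small and setting $B := \{(x, z) : (x, \hat z) \in V, \langle \hat z, \sigma(x)\rangle < -\epsilon, \alpha < |z| < 1\}$ (a set of positive measure whether $\langle z_0, \sigma(x_0)\rangle$ is strictly negative or zero, since in the latter case $V$ still contains a half-neighborhood on which $\langle \hat z, \sigma\rangle$ can be kept uniformly negative), and using polar coordinates $z = \rho\omega$ together with the bound $(1 - r\langle\sigma, z\rangle)^n \ge (r\rho\epsilon)^n$ on $B$, one gets a loss term
\begin{equation*}
\int_B r^m(1 - r\langle\sigma, z\rangle)^n \, dz\, d\operatorname{vol}_\Sigma \ge c(1 - \alpha) r^{n+m}
\end{equation*}
for some $c > 0$ independent of $\alpha$ and $r$. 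Feeding this back into the chain of inequalities and taking the iterated limit $r \to \infty$ followed by $\alpha \to 1^-$ would then give $\int_\Sigma |\sigma|^n \ge \theta|\mathbb{S}^n| + c'$ for some $c' > 0$, contradicting the equality hypothesis.

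Once distance-minimization is established, the absence of focal points along $\gamma$ for $s > 0$ is the standard Jacobi-field consequence. For injectivity of the normal exponential map on $H := \{(x, z) \in T^\perp \Sigma^+ : \langle z, \sigma\rangle \le 0\}$, suppose $\exp_{x_1}(z_1) = \exp_{x_2}(z_2) = p$ with $(x_i, z_i) \in H$ and both $z_i \ne 0$ (the case where some $z_i = 0$ is immediate from distance-minimization). Distance-minimization forces $|z_1| = |z_2| =: L$; extending $\gamma_1(s) := \exp_{x_1}(s z_1/L)$ beyond $L$ (still distance-minimizing by the first part), the triangle inequality $s = d(\gamma_1(s), \Sigma) \le d(\gamma_1(s), x_2) \le L + (s-L) = s$ forces the broken path $\gamma_2 \cdot \gamma_1|_{[L, s]}$ to be length-minimizing, hence a smooth geodesic, whence $\gamma_1 \equiv \gamma_2$ and $(x_1, z_1) = (x_2, z_2)$. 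Together with the local diffeomorphism property, this promotes the injection to a diffeomorphism onto its image. The main obstacle I anticipate is the loss estimate in the second paragraph: verifying the correct scaling $(1-\alpha)r^{n+m}$ that matches the order of the main term in the derivation of Theorem \ref{thm general fenchel willmore'} hinges on showing that $(1 - r\langle\sigma, z\rangle)^n$ is genuinely of order $r^n$ on a positive-measure portion of $B$ where $\langle z, \sigma\rangle$ stays bounded away from zero and negative.
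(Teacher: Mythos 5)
Your proof takes essentially the same approach as the paper's: argue by contradiction, propagate the failure of distance-minimization to a uniform gap on an open neighborhood $V$ (the paper does this via the equivalent statement $(x_0,z_0)\notin\mathcal A_{r_0}\Rightarrow(x_0,z_0)\notin\mathcal A_r$ for all $r\ge r_0$), re-insert the indicator $1_{A_r}$ into the estimate of $I^+(\alpha,r)$, and extract a loss term of order $(1-\alpha)r^{n+m}$ that contradicts equality. Your restriction to $\langle\hat z,\sigma\rangle<-\epsilon$ on the loss set $B$ is the correct analogue of the paper's restriction of the loss integral $J^W$ to $t\in[-1,0]$, and your explicit geodesic-concatenation argument for injectivity fills in a step the paper merely asserts.
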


\begin{proof}
Define $\mathcal A_r=\{(x, z)\in T^\perp\Sigma^+, d\left(q, \exp_x(r z)\right) \ge r|z| \text { for all } q \in \Sigma \}$.
Suppose the conclusion is not true, then there exist $r_{0}>0$, $x_{0} \in \Sigma^+$, $z_{0}\in T_{x_0}^\perp \Sigma$, with $|z_{0}|=1$ and $\langle z_0, \sigma(x_0)\rangle \le 0$ such that $(x_{0}, z_{0})\notin \mathcal A_{r_{0}}$.

We claim that
$(x_0, z_0)\notin \mathcal A_{r}$ for all $r\ge r_0$. This is because there exists $q_0\in \Sigma$ such that
$d(q_0, \exp_{x_0}(r_0z_0) <r_0$ and hence for $r\ge r_0$,
\begin{align*}
d(q_0, \exp_{x_0}(rz_0)
\le& d(q_0, \exp_{x_0}(r_0z_0) +(r-r_0) \\
<& r_0 +(r-r_0)
=r, \quad \text{i.e.}(x_0, z_0)\notin \mathcal A_r.
\end{align*}

It then follows that there is a neighborhood $V$ of $(x_0, z_0)$ in $T^\perp \Sigma^+$ such that for $(x, z)\in V$, $(x, z)\notin \mathcal A_r$ for $r\ge r_0$. Define $W=V\cap \{(x, z)\in T^\perp \Sigma^+: |z|<1\}\ne \emptyset$.

As in \eqref{ineq est I0 I+},
\begin{equation}\label{ineq equality case1}
\begin{aligned}
& |\{p \in M: \alpha r < d(x, p) < r \text{ for all } x \in \Sigma\}|\\
\le& \int_{\Sigma^+} \int_{-1}^{\frac{1}{r|\sigma|}} \int_{Y_{\alpha, x, t}} \left|\det D\Phi_{r}(x, y, t)\right|\, 1_{A_r}(x, y, t) dy\, dt\, d\operatorname{vol}_{\Sigma}+I^0(\alpha, r) \\
\le& \int_{\Sigma^+} \int_{-1}^{ \frac{1}{r|\sigma|}} \int_{Y_{\alpha, x, t}} r^m(1-r|\sigma| t)^n 1_{A_r}(x, y, t) dy\, dt\, d\operatorname{vol}_{\Sigma}+I^0(\alpha, r) \\
\le& \int_{\Sigma^+} \int_{-1}^{ \frac{1}{r|\sigma|}} \int_{Y_{\alpha, x, t}} r^{m}\bigl(1-r|\sigma| t\bigr)^{n} dy\, dt\, d\operatorname{vol}_{\Sigma}\\
& - \int_{\Sigma^+} \int_{-1}^{ \frac{1}{r|\sigma|}} \int_{Y_{\alpha, x, t}}1_{W}(x, y, t) r^{m}\bigl(1-r|\sigma| t\bigr)^{n} dy\, dt\, d\operatorname{vol}_{\Sigma}+I^0(\alpha, r)\\
=:& J^+(\alpha, r) -J^W(\alpha, r)+I^0(\alpha, r),
\end{aligned}
\end{equation}
where $I^0$ is defined in \eqref{ineq est I0 I+}. It is not hard to see that (see the next lemma for a similar argument)
\begin{align*}
& \liminf_{\alpha \rightarrow 1^{-}} \liminf_{r \rightarrow \infty} \frac{1}{1-\alpha} \cdot \frac{1}{r^{n+m}}J^W(\alpha, r)\\
\ge& \liminf_{\alpha \rightarrow 1^{-}} \liminf_{r \rightarrow \infty} \frac{1}{1-\alpha} \cdot \frac{1}{r^{n+m}}\int_{\Sigma^+} \int_{-1}^{0} \int_{Y_{\alpha, x, t}} 1_W(x, y, t) r^m(1-r|\sigma| t)^n d y d t d \operatorname{vol}_{\Sigma}>0.
\end{align*}
We can divide \eqref{ineq equality case1} by $r^{n+m}$ and $1-\alpha$ and analyze the limit as before. In view of the proof of Theorem \ref{thm general fenchel willmore'}, the above strict inequality then implies that the equality $\int_\Sigma|\sigma|^n= \theta|\mathbb S^n|$ does not hold, a contradiction.
\end{proof}

\begin{lemma}\label{lem xyt=1}
For every $r>0, x \in \Sigma^+, z \in {T}_x^{\perp} \Sigma$ satisfying $|z|=1$ and $\langle z, \sigma(x)\rangle \le0$, we have
$$
\left|\det D \Phi_r(x, z)\right| \ge r^m\left(1-r\langle z, \sigma(x) \rangle \right)^n.
$$
\end{lemma}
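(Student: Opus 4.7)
We argue by contradiction, mirroring the strategy used in the proof of Lemma \ref{lem no focal}. Suppose the lemma fails at some $r_0 > 0$, $x_0 \in \Sigma^+$, and $z_0 \in T_{x_0}^\perp \Sigma$ with $|z_0| = 1$ and $\langle z_0, \sigma(x_0) \rangle \le 0$, so that
$$
|\det D\Phi_{r_0}(x_0, z_0)| < r_0^m \bigl(1 - r_0 \langle z_0, \sigma(x_0) \rangle\bigr)^n.
$$
After a tiny perturbation of $z_0$ toward $-\sigma(x_0)/|\sigma(x_0)|$ (by continuity of the Jacobian the strict inequality persists, since the right-hand side only increases), we may assume $\langle z_0, \sigma(x_0) \rangle < 0$ is strict. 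Lemma \ref{lem no focal} then guarantees that no focal point of $\Sigma$ occurs along $\exp_{x_0}(s z_0)$ for $s > 0$, and the Jacobi field analysis from the proof of Lemma \ref{lem 2.4''} applies on all of $(0, \infty)$. The resulting monotone-decreasing ratio
$$
s \mapsto \frac{|\det D\Phi_s(x_0, z_0)|}{s^m \bigl(1 - s \langle \sigma(x_0), z_0 \rangle\bigr)^n}
$$
tends to $1$ as $s \to 0^+$, and by hypothesis is $\le 1 - 2\delta$ at $s = r_0$ for some $\delta > 0$. Combining monotonicity with the smoothness of Jacobi fields in the initial data, we obtain
$$
|\det D\Phi_s(x, z)| \le (1 - \delta)\, s^m \bigl(1 - s \langle \sigma(x), z \rangle\bigr)^n
$$
uniformly for every $s \ge r_0$ and every $(x, z)$ in an open neighborhood $V_0 \subset \{(x, z) \in T^\perp \Sigma^+ : |z| = 1, \ \langle z, \sigma(x) \rangle < 0\}$ of $(x_0, z_0)$.

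A short Jacobi field reparametrization (using $X_i^{(\rho)}(s) = X_i^{(1)}(s\rho)$ and $X_\alpha^{(\rho)}(s) = X_\alpha^{(1)}(s\rho)/\rho$ in the notation of the proof of Lemma \ref{lem 2.4''}) gives the scaling identity
$$
|\det D\Phi_r(x, \rho z)| = \rho^{-m}\, |\det D\Phi_{r\rho}(x, z)|, \qquad |z| = 1, \ 0 < \rho \le 1.
$$
Combining with the previous estimate yields the improved pointwise bound
$$
|\det D\Phi_r(x, \rho z)| \le (1 - \delta)\, r^m \bigl(1 - r \langle \sigma(x), \rho z \rangle\bigr)^n
$$
for every $(x, z) \in V_0$, $\alpha \le \rho < 1$, and $r$ large enough that $r\alpha \ge r_0$. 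In particular, the strict inequality at the single unit direction $z_0$ upgrades to a strict Jacobian improvement on the annular cone $W := \{(x, \rho z) : (x, z) \in V_0, \ \alpha < \rho < 1\} \subset T^\perp \Sigma^+$.

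Feeding this improvement into the proof of Theorem \ref{thm general fenchel willmore'} exactly as in Lemma \ref{lem no focal} (splitting the Jacobian bound on $\Sigma^+ \times Z_\alpha$ according to membership in $W$), we obtain
$$
I^+(\alpha, r) \le (\text{the usual upper bound}) - \delta \int_W r^m \bigl(1 - r \langle \sigma, z' \rangle\bigr)^n\, 1_{A_r}\, dz'\, d\operatorname{vol}_\Sigma.
$$
On $W$, $\langle \sigma, z' \rangle < 0$ is bounded away from $0$ (by choice of $V_0$), so passing to $(y, t)$ coordinates as at the end of the proof of Lemma \ref{lem no focal} one verifies
$$
\liminf_{\alpha \to 1^-} \liminf_{r \to \infty} \frac{1}{(1 - \alpha) r^{n+m}} \int_W r^m \bigl(1 - r \langle \sigma, z' \rangle\bigr)^n\, 1_{A_r}\, dz'\, d\operatorname{vol}_\Sigma > 0.
$$
Substituting into \eqref{ineq est I0 I+} and redoing the gamma function manipulation from the proof of Theorem \ref{thm general fenchel willmore'} yields $\int_\Sigma |\sigma|^n > \theta |\mathbb S^n|$, contradicting the standing equality hypothesis. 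The main obstacle is the scaling identity, which is what lets the monotonicity at a single unit normal direction be transferred into an improved Jacobian bound on the annular fiber region that actually appears in the Fenchel-Willmore integral; the rest is direct bookkeeping modeled on the argument in Lemma \ref{lem no focal}.
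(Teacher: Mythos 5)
Your proposal is correct and follows the same overall strategy as the paper: argue by contradiction, localize an improved Jacobian bound to a small set, and feed that improvement through the volume estimate of Theorem \ref{thm general fenchel willmore'} to upgrade the inequality to a strict one. The paper's route to the localized improvement is slightly more direct than yours: it takes $V$ to be an open neighbourhood of $(x_0,z_0)$ inside $T^\perp\Sigma$ itself (so $V$ automatically contains sub-unit vectors), gets the factor $(1-\varepsilon)$ at $s=r_0$ by plain continuity of the Jacobian on $V$, and then propagates it to $s=r>r_0$ via the monotonicity of Lemma \ref{lem 2.4''}. You instead work entirely inside the unit normal bundle, propagate the improvement in $s$ there using the same monotonicity, and then transfer it to the annular cone $W$ via the scaling identity $|\det D\Phi_r(x,\rho z)| = \rho^{-m}|\det D\Phi_{r\rho}(x,z)|$ — a clean reformulation of the same monotone quantity, since the ratio in Lemma \ref{lem 2.4''} is precisely what makes that identity useful. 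Your reduction to $\langle z_0,\sigma(x_0)\rangle<0$ strict (so that $V_0$ stays in the half-space where Lemma \ref{lem no focal} applies) plays a role analogous to the paper's reduction to $y_0 \ne 0$ (which the paper needs so the polar-coordinate measure estimate on $\mathcal O\subset\mathbb S^{m-2}$ makes sense); the two reductions serve different downstream steps but are both harmless WLOG moves. Two things you gloss over and the paper treats explicitly: the positive-$\liminf$ verification for the deficit integral (the paper spends several displays on it, involving the fiber measure lower bound $|V\cap Y_{\alpha,x,t}|\gtrsim (1-\alpha)$ and a monotone-in-$t$ estimate), and the separate handling of $m=1$ and $m=2$ (which require either a product-with-$\mathbb R$ trick or a direct one-dimensional fiber estimate). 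Your sketch defers these to ``bookkeeping,'' which is acceptable as an outline but does elide real content.
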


\begin{proof}

Assume on the contrary that there exists $x_{0}\in \Sigma^+$, $z_{0}\in {T}_{x}^{\perp}\Sigma$ that satisfy $|z_{0}|=1$ and $\langle z_0, \sigma(x_0)\rangle \le0$, such that
$$
\left|\det D \Phi_{r_{0}}(x_{0}, z_0)\right| < r_{0}^m\left(1-r_0\langle z_0, \sigma(x_0)\rangle \right)^n
$$
for some $r_{0}>0$. Since this is an open condition, we can without loss of generality assume that $y_0\ne0$.

By continuity, there exist $\varepsilon\in(0, 1)$ and a neighbourhood $V$ of $(x_{0}, z_0)$ in $T^{\perp}\Sigma$ such that
$$
\left|\det D\Phi_{r_{0}}(x, z)\right|
< (1-\varepsilon) r_{0}^{m} \bigl(1-r_{0}\langle z, \sigma(x)\rangle \bigr)^{n}
\quad\text{on } V.
$$
Lemma \ref{lem no focal} and Lemma \ref{lem 2.4''} then imply that for every $r>r_{0}$,
$$
\left|\det D\Phi_{r}(x, z)\right|
< (1-\varepsilon) r^{m} \bigl(1-r \langle z, \sigma(x)\rangle \bigr)^{n}
\quad\text{on } V\cap A_{r}.
$$

Now, assume that $m\ge 3$ and write $(x, z)=(x, y, t)$ as before.
For $\alpha\in(0, 1)$ and each $(x, t)$, define
$$
Y_{\alpha, x, t}
=\bigl\{ y\in\widetilde T_{x}^{\perp}\Sigma:
\alpha^{2}<|y|^{2}+t^{2}<1\bigr\},
$$
and write $Y_{\alpha}=Y_{\alpha, x, t}$.

Consequently, by applying Lemma \ref{lem 2.2''}, and following the reasoning in \eqref{ineq est I0 I+}, \eqref{ineq I+}, \eqref{ineq I+2}, \eqref{ineq I+3} and \eqref{ineq I+4}, we have
\begin{equation}\label{ineq est equality case}
\begin{aligned}
& |\{p \in M: \alpha r<d(x, p)<r \text { for all } x \in \Sigma\}| \\
\le& \int_{\Sigma^+} \int_{-1}^{\frac{1}{r|\sigma|}} \int_{Y_\alpha}
\left|\det D \Phi_r(x, y, t)\right| 1_{A_r}(x, y, t) d y d t d \mathrm{vol}_{\Sigma} \\
\le& \int_{\Sigma^+} \int_{-1}^{\frac{1}{r|\sigma|}} \int_{Y_\alpha}
\big(1-\varepsilon 1_{V}(x, y, t)\big) r^m\left(1-r|\sigma| t\right)^n d y d t d \mathrm{vol}_{\Sigma}+I^{0}(\alpha, r)\\
\le&
(1-\alpha)\frac{\Gamma\left(\frac{n+1}{2}\right) \Gamma\left(\frac{m-1}{2}\right)}{2 \Gamma\left(\frac{n+m}{2}\right)} r^{m+n}\int_{\Sigma^+}|\sigma|^n\\
& - \varepsilon \int_{\Sigma^+} \int_{-1}^{\frac{1}{r|\sigma|}} \int_{Y_\alpha}
1_{V}(x, y, t) r^m\left(1-r|\sigma| t\right)^n d y d t d \mathrm{vol}_{\Sigma}\\
& +O(1-\alpha)O\left(r^{m+n-1}\right) +O((1-\alpha)^2)O(r^{m+n})\\
=&:J(\alpha, r)-\varepsilon I(\alpha, r)+\mathrm{remainder}
\end{aligned}
\end{equation}
for all $r>r_{0}$ and $\alpha$ close to $1$.

We now claim that $\displaystyle \liminf_{\alpha \to 1^{-}} \liminf_{r \to \infty} \frac{1}{1-\alpha}\cdot\frac{1}{r^{n+m}} I(\alpha, r)$ is positive. This is straightforward but slightly tedious.

As $V$ is open, for $\alpha$ sufficiently close to 1 the set $V \cap Y_{\alpha, x_0, t_0}$ contains, in polar coordinates on $\widetilde{T}_{x_0}^{\perp} \Sigma$, an open set
$\{(\rho, \theta):\ \alpha^{2}-t_{0}^{2}<\rho^{2}<1-t_{0}^{2}, \ \theta\in\mathcal O\}$,
where $\mathcal O\subset\mathbb S^{m-2}$ is an open neighbourhood of $\frac{y_{0}}{|y_{0}|}$ independent of $\alpha$.

Hence, for $\alpha$ close to $1$,
$$
|V\cap Y_{\alpha, x_{0}, t_{0}}|
\;\ge\;
\frac{|\mathcal O|}{m-1}\Bigl[(1-t_{0}^{2})^{\frac{m-1}{2}}-(\alpha^{2}-t_{0}^{2})^{\frac{m-1}{2}}\Bigr]
\ge2 \delta_1(1-\alpha)
$$
for some $\delta_{1}>0$ independent of $\alpha$. By shrinking $V$ if necessary, we may assume $|V\cap Y_{\alpha, x, t}|\ge\delta_{1}(1-\alpha)$ for all $(x, y, t)\in V$ and $\alpha$ close to $1$.

Consequently, for $\alpha$ near $1$,
\begin{equation}\label{ineq lower bound 2}
\frac{1}{1-\alpha}\cdot \frac{1}{r^{n+m}} I(\alpha, r) \ge \frac{\delta_{1}}{r^{n}}\int_{B_{\rho}(x_{0})} \int_{T_{x, r}\cap V} \bigl(1-r|\sigma| t\bigr)^{n}dt\, d\operatorname{vol}_{\Sigma},
\end{equation}
for some $\rho>0$, where $T_{x, r} =\{t\in \mathbb R: -1<t<\tfrac{1}{r|\sigma(x)|}\}$.

Because the integrand of the integral on the RHS in \eqref{ineq lower bound 2} is
decreasing in $t$, we have
\begin{equation}\label{ineq TV}
\begin{aligned}
\int_{T_{x_{0}, r} \cap V}
\bigl(1-r|\sigma(x_{0})| t\bigr)^{n} dt
\ge& \int_{ t_0 -\delta_{2}}^{ t_0 } \bigl(1-r|\sigma(x_{0})| t\bigr)^{n} dt \\
\ge& \int_{ -\delta_{2}}^{0} \bigl(1-r|\sigma(x_{0})| t\bigr)^{n} dt \\
=& \frac{1}{r|\sigma(x_{0})|(n+1)}
\left[\bigl(1+r|\sigma(x_{0})|\delta_{2}\bigr)^{n+1}-1\right]\\
\ge& \frac{\delta_2^{n+1}r^n|\sigma(x_0)|^{n}}{n+1},
\end{aligned}
\end{equation}
where $0<\delta_{2}<1$ is chosen so that $V \cap \{x=x_{0}, y=y_{0}\}$ contains the interval
$(t_{0}-\delta_{2}, t_{0})$.
By continuity,
$$
\int_{T_{x, r} \cap V}
\bigl(1-r|\sigma| t\bigr)^{n} dt
\ge\frac{\delta_{2}^{\, n+1} r^{n}}{2(n+1)} |\sigma(x_{0})|^{n}
$$
for $x$ near $x_{0}$.

Combining this with \eqref{ineq lower bound 2}, we find that
$$
\liminf_{\alpha \rightarrow 1^{-}} \liminf_{r \rightarrow \infty}
\frac{1}{1-\alpha}\cdot \frac{1}{r^{n+m}} I(\alpha, r) > 0.
$$

Putting this into \eqref{ineq est equality case} and proceeding as in the proof of Theorem \ref{thm general fenchel willmore'}, we conclude that
$$
\theta|\mathbb S^{n}|< \int_{\Sigma} |\sigma|^{n},
$$
which is a contradiction.

If $m=2$, note that the condition in Theorem \ref{thm general fenchel willmore'} is that the sectional curvature of $M$ is non-negative. By taking the product of $M$ with $\mathbb{R}$, we can view $\Sigma$ as a codimension $3$ submanifold. Observe that the product manifold $M \times \mathbb{R}$ still has nonnegative sectional curvature, while both $\theta$ and $\int_{\Sigma}|\sigma|^n$ remain invariant. The preceding argument can still be applied, which yields a contradiction.

When $m = 1$, the previous argument requires a slight modification. Observe that $z_0 = -\frac{\sigma(x_0)}{|\sigma(x_0)|}$, and hence, under our identification, $t_0 = -1$. We choose a sufficiently small neighborhood $V$ of $(x_0, t_0)$ as before, with the additional property that for every $(x, t)\in V$, and for all $\alpha$ sufficiently close to $1$ and $r$ sufficiently large, one has $T_{\alpha, r, x}\cap V_x \supset (-1, -\alpha)$. Here $T_{\alpha, r, x}:= \{ t \in \mathbb{R}: -1 < t < \tfrac{1}{r|\sigma(x)|}, \ \alpha < |t| < 1 \}$ and $V_x:= \{ t \in \mathbb{R}: (x, t)\in V \}$.

We replace \eqref{ineq est equality case} with
\begin{equation}\label{ineq est equality case'}
\begin{aligned}
& | \{p \in M: \alpha r<d(x, p)<r \text { for all } x \in \Sigma\} | \\
\le & \int_{\Sigma^{+}} \int_{T_{\alpha, r, x}} \left|\det D \Phi_r(x, t)\right| 1_{A_r}(x, t) d t d \operatorname{vol}_{\Sigma} \\
\le & \int_{\Sigma^{+}} \int_{T_{\alpha, r, x}} \left(1-\varepsilon 1_V(x, t)\right) r(1-r|\sigma| t)^n d t d \operatorname{vol}_{\Sigma}+I^0(\alpha, r)\\
=& \int_{\Sigma^{+}} \int_{T_{\alpha, r, x}} r(1-r|\sigma| t)^n d t d \operatorname{vol}_{\Sigma}
-\varepsilon\int_{\Sigma^{+}} \int_{T_{\alpha, r, x}\cap V_x} r(1-r|\sigma| t)^n d t d \operatorname{vol}_{\Sigma}
+I^0(\alpha, r).
\end{aligned}
\end{equation}
We replace \eqref{ineq TV} with
\begin{equation*}
\begin{aligned}
\int_{T_{\alpha, r, x_0 }\cap V_{x_0}}r \left(1-r\left|\sigma\left(x_0\right)\right| t\right)^n d t
\ge& r \int_{-1}^{-\alpha}\left(1-r\left|\sigma\left(x_0\right)\right| t\right)^n d t\\
=& \frac{\left(1+r\left|\sigma\left(x_0\right)\right|\right)^{n+1}-\left(1+\alpha r\left|\sigma\left(x_0\right)\right|\right)^{n+1}}{(n+1)\left|\sigma\left(x_0\right)\right|}\\
=& {r\left(1+r\left|\sigma\left(x_0\right)\right|\right)^n}(1-\alpha)+O(r^{n+1}(1-\alpha)^2)
\end{aligned}
\end{equation*}
if $\alpha$ is close enough to $1$ and $r$ sufficiently large.

By continuity, it is then easy to see that
$$
\liminf_{\alpha \rightarrow 1^{-}} \liminf_{r \rightarrow \infty} \frac{1}{1-\alpha} \cdot \frac{1}{r^{n+1}}\int_{\Sigma^{+}} \int_{T_{\alpha, r, x}\cap V_x} r(1-r|\sigma| t)^n d t d \operatorname{vol}_{\Sigma}>0.
$$
In view of \eqref{ineq est equality case'}, the same reasoning as in the previous argument applies, which leads to a contradiction.
\end{proof}

We are now ready to prove Theorem \ref{thm equality case}, the equality case of Theorem \ref{thm general fenchel willmore'}.
\begin{proof}

\noindent(a) and (b) \\
Suppose the equality holds. Then $\Sigma$ must be connected, for otherwise we can apply the argument to any of its component and get a bigger value of $\int_\Sigma|\sigma|^n$.

Fix $(x, z)=(x, y, t)$ such that $x\in \Sigma^+$, $t\le 0$ and $|y|^2+t^2=1$.
Define $\mathrm{A}=-\langle\mathrm{II}(x), y\rangle-t\left\langle\mathrm{II}(x), \frac{\sigma}{|\sigma|}\right\rangle$. There exists small enough $s_0>0$, such that $g_{\Sigma}+s \mathrm{A}>0$ for all $0<s<s_0$. We may then define the vectors $\left\{e_1, \cdots, e_{n+m}\right\}$ and hence the family of matrices $P(s)$ along ${\gamma}(s)=\exp_{ x }\left(s\left(y+t\frac{\sigma}{|\sigma|}\right)\right)$, in a manner analogous to the construction in the proof of Lemma \ref{lem 2.4''}.
For small enough $s$, we have $\left|\det D \Phi_s(x, y, t)\right|= |\det P(s)| \ge s^m\left(1-s t|\sigma(x)|\right)^n$ by Lemma \ref{lem xyt=1}. Hence
$$
\det P(s) \ge s^m\left(1-s t|\sigma(x)|\right)^n>0
$$
for $s \in\left(0, s_0\right)$, by making $s_0$ smaller if necessary.

This implies $P(s)>0$ on $(0, s_0)$. We can then define $Q(s)= P^{\prime}(s)P(s)^{-1}$ for $s \in\left(0, s_0\right)$ and follow the analysis of the Riccati equation as in the proof of Lemma \ref{lem 2.4''} to deduce that for $s\in(0, s_0)$,
\begin{equation}\label{ineq det P}
\det P(s) \le s^m(1-s t|\sigma(x)|)^n.
\end{equation}
Therefore,
\begin{equation*}
\begin{aligned}
\det P(s) =s^m(1-s t|\sigma(x)|)^n
\end{aligned}
\end{equation*}
on $(0, s_0)$.

Using the same notation and Riccati inequality analysis as in the proof of Lemma~\ref{lem 2.4''} leading to \eqref{ineq det P}, and applying Lemma~\ref{lem riccati} together with \eqref{ineq cs}, we conclude that the matrix $Q(s)=P^{\prime}(s) P^{-1}(s)$ satisfies
\begin{equation}\label{eq Q}
Q(s)=\begin{bmatrix}
-\frac{\langle\sigma(x), z\rangle}{1-s\langle\sigma(x), z\rangle}I_n& 0\\
0& \frac{1}{s}I_m
\end{bmatrix}
\end{equation}
and
\begin{equation}\label{eq R}
\sum_{i=1}^n\left\langle\bar{R}\left(E_i(s), {\gamma}^{\prime}(s)\right) {\gamma}^{\prime}(s), E_j(s)\right\rangle=0, \quad
\sum_{\alpha=n+1}^{n+m}\left\langle\bar{R}\left(E_\alpha(s), {\gamma}^{\prime}(s)\right) {\gamma}^{\prime}(s), E_\alpha(s)\right\rangle=0
\end{equation}
for $s\in(0, s_0)$.

By \eqref{Q0}, it then follows that $\left\langle\mathrm{II}, z\right\rangle=\langle \sigma, z\rangle g_\Sigma$.
Since $x \in \Sigma^{+}$ and $z=y+t\frac{\sigma}{|\sigma|}\in T^\perp_x\Sigma$ with $t\le0$ and $|y|^2+t^2=1$ are arbitrary, we conclude that
for $x\in \Sigma^+$, $\mathrm{II}(x)=\sigma(x) g_\Sigma$. i.e. $\Sigma^+$ is umbilical.

By taking the trace of the Codazzi equation $\nabla_X \mathrm{II}(Y, Z)-\nabla_Y \mathrm{II}(X, Z)=(\bar{R}(X, Y) Z)^\perp$ on $\Sigma^+$, and using $\mathrm{II}(x)=\sigma(x) g_{\Sigma}$, we have
\begin{align*}
(n-1)\nabla^\perp_{e_i}\sigma=\sum_{j=1}^n \left(\bar{R}\left(e_i, e_j\right) e_j\right)^{\perp},
\end{align*}
where $\{e_i\}_{i=1}^{n}$ is a local orthonormal frame on $\Sigma^+$ and $\nabla^\perp$ is the normal connection.

Fix a unit vector $z\in T_x^\perp \Sigma^+$ and define the symmetric bilinear form $\mathrm{S}$ on $T_x\Sigma \oplus\operatorname{span} (z)$ by $\mathrm{S}(U, V)=\sum_{i=1}^{n}\langle \bar R(U, e_i)e_i, V\rangle+\langle \bar R(U, z)z, V\rangle$. Then it is easy to see that the assumption $\overline {\mathrm{Ric}}_k\ge0$ implies $\overline {\mathrm{Ric}}_{n}\ge 0$, which in turn implies that $\mathrm{S}$ is non-negative.
By taking $s\to0^+$ in the first equation of \eqref{eq R}, we have
$\mathrm{S}(z, z)=0$, and so $\mathrm{S}(z, \cdot)=0$ on $T_x\Sigma\oplus\operatorname{span}(z)$. We then deduce that
\begin{equation*}
(n-1) \langle \nabla_{e_i}^{\perp} \sigma, z\rangle =\sum_{j=1}^n \langle \bar{R}\left(e_i, e_j\right) e_j, z\rangle=0.
\end{equation*}
Since the unit vector $z\in T_x^\perp \Sigma$ is arbitrary, we deduce that the normalized mean curvature vector $\sigma$ is parallel if $n>1$.

Let $n>1$. $\Sigma^+$ is obviously an open subset in $\Sigma$. Take $\mathring{\Sigma}$ to be any component of $\Sigma^+$. Let $p_0$ be a limit point of $\mathring{\Sigma}$, then we can take a sequence in $\mathring{\Sigma}$ converging to $p_0$. Along this sequence $|\sigma|=c>0$ is constant, and hence $|\sigma(p_0)|=c$ and $p_0$ lies in the open set $\Sigma^+$. Therefore $p_0\in\mathring{\Sigma}$. Consequently, $\mathring{\Sigma}$ is a non-empty open and closed subset, and hence $\Sigma^+=\Sigma$. Therefore, in the rest of the proof below, we can replace $\Sigma^+$ by $\Sigma$ if $n>1$.

We know from Lemma \ref{lem no focal} that the normal exponential map $\exp$ is a diffeomorphism from $\{(x, z)\in T^\perp \Sigma^+: \langle z, \sigma(x)\rangle \le 0\}$ onto its image. In particular, $\Sigma^+$ is an embedded submanifold.

Using the notation in the proof of Lemma \ref{lem 2.4''}, the equation $P'(s)=Q(s)P(s)$ together with \eqref{eq Q} then implies that the Jacobi fields $X_i(s)$ along ${\gamma}$ defined by
\begin{equation*}
\begin{aligned}
X_i(0)=& e_i \text { and } X_i^{\prime}(0)=\lambda_i e_i \text{ for } i=1, \cdots, n, \\
X_\alpha(0)=& 0 \text { and } X_\alpha'(0)=e_\alpha \text{ for }\alpha=n+1, \cdots, n+m
\end{aligned}
\end{equation*}
satisfy
\begin{align*}
X_i(s)= \left(1-s\langle\sigma(x), z\rangle\right) E_i(s) \text{ for } i=1, \cdots, n
\end{align*}
and
\begin{align*}
X_\alpha(s)= s E_\alpha(s)\text{ for }\alpha=n+1, \cdots, n+m.
\end{align*}

We conclude that the pullback metric of $M$ by the normal exponential map to the subset
$$\left\{(x, z) \in T^{\perp} \Sigma^+:\langle z(x), \sigma(x)\rangle \le 0\right\}=\{(x, y, t)\in \widetilde T^\perp \Sigma^+\times \mathbb R: t\le0\}$$ of the normal bundle of $\Sigma$ is given by
\begin{equation}\label{eq g}
g(x, y, t)=d t^2+d y^2+(1-t|\sigma(x)|)^2 g_{\Sigma}.
\end{equation}

When $n>1$, then $|\sigma|=\left(\frac{\theta\left|\mathbb{S}^n\right|}{|\Sigma|}\right)^{\frac{1}{n}}>0$ is constant and by the change of variable $r=r_0-t$, the pullback metric can also be expressed in the form
\begin{align*}
g=dr^2+dy^2+\left(\frac{r}{r_0}\right)^2g_\Sigma,
\end{align*}
where $r_0=\left(\frac{|\Sigma|}{\theta|\mathbb S^n|}\right)^{\frac{1}{n}}$.

Conversely, it is easy to see that if $\Sigma$ and $g$ are given above, then $\int_\Sigma|\sigma|^n=\theta |\mathbb S^n|$.

\noindent(c)
We now show that the image of the normal exponential map applied to normal vectors in the half-space opposite the mean curvature vector fills the tubular neighbourhood of $\Sigma$ with asymptotic volume density one. A precise formulation is as follows.

Let $\Sigma_r^+$ denote the subset of the normal bundle of $\Sigma^+$ given by
$$\Sigma_r^+:= \{(x, z) \in T^\perp \Sigma^+: \langle \sigma(x), z \rangle \le 0, \; |z| < r\}. $$
We compute the volume of the image of $\Sigma_r^+$ under the normal exponential map. By \eqref{eq g},
\begin{align*}
|\exp(\Sigma^+_r)|
=& \int_{\Sigma_r^+}\exp^* \mathrm{vol}_g\\
=& \int_{\Sigma^+}\int_{-1}^0 \int_{Y_{x, t}}\left|\det D \Phi_r(x, y, t)\right| d y d t d \operatorname{vol}_{\Sigma}(x)\\
=& \int_{\Sigma^{+}} \int_{-1}^0 \int_{Y_{x, t}} r^m(1-r|\sigma(x)| t)^n d y d t d \operatorname{vol}_{\Sigma}(x) \\
=& \left|\mathbb{B}^{m-1}\right| r^m \int_{\Sigma^{+}} \int_{-1}^0\left(1-t^2\right)^{\frac{m-1}{2}}(1-r|\sigma(x)| t)^n d t d \operatorname{vol}_{\Sigma}(x).
\end{align*}
Here $Y_{x, t}:=\left\{y \in \widetilde{T}_x^{\perp} \Sigma:|y|^2+t^2<1\right\}$.

Consider the fiber integral
\begin{equation*}
\begin{aligned}
\int_{-1}^0\left(1-t^2\right)^{\frac{m-1}{2}}(1-r|\sigma(x)| t)^n d t & =r^n|\sigma(x)|^n \int_0^1 u^n\left(1-u^2\right)^{\frac{m-1}{2}} d u+O\left(r^{n-1}\right) \\
& =\frac{\Gamma\left(\frac{n+1}{2}\right) \Gamma\left(\frac{m+1}{2}\right)}{2 \Gamma\left(\frac{n+m+2}{2}\right)}|\sigma(x)|^n r^n+O\left(r^{n-1}\right)
\end{aligned}
\end{equation*}
as $r\to \infty$.

Therefore, using $\int_{\Sigma^+}|\sigma|^n=\theta|\mathbb S^n|$, we arrive at
\begin{align*}
|\exp(\Sigma^+_r)|
=& \frac{\Gamma\left(\frac{n+1}{2}\right) \Gamma\left(\frac{m+1}{2}\right)}{2 \Gamma\left(\frac{n+m+2}{2}\right)}|\mathbb B^{m-1}|r^{n+m}\int_{\Sigma^+} |\sigma(x)|^n +O(r^{n+m-1})\\
=& \theta \frac{\Gamma\left(\frac{n+1}{2}\right) \Gamma\left(\frac{m+1}{2}\right)}{2 \Gamma\left(\frac{n+m+2}{2}\right)}|\mathbb B^{m-1}| |\mathbb S^n|r^{n+m}+O(r^{n+m-1}).
\end{align*}
On the other hand, we have the volume estimate for the tubular neighborhood of radius $r$ around $\Sigma$, denoted by $N_r(\Sigma) =
\{p \in M: d(p, \Sigma) < r \}$:
\begin{align*}
|N_r(\Sigma)|=|\{p \in M: d(p, \Sigma)< r \}| = \theta |\mathbb B^{n+m}|r^{n+m}(1+o(1))
\end{align*}
as $r\to \infty$, where $o(1)\to 0$ as $r\to \infty$.

Obviously, $\exp(\Sigma^+_r)\subset N_r(\Sigma)$.
The volume ratio satisfies
\begin{align*}
\frac{|\exp(\Sigma^+_r)|}{|N_r(\Sigma)|}
=& \frac{\Gamma\left(\frac{n+1}{2}\right) \Gamma\left(\frac{m+1}{2}\right)}{2 \Gamma\left(\frac{n+m+2}{2}\right)}
\cdot\frac{\left|\mathbb{B}^{m-1}\right|\left|\mathbb{S}^n\right|}{\left|\mathbb{B}^{n+m}\right|} +o(1).
\end{align*}
Using $\left|\mathbb{B}^k\right|=\frac{\pi^{\frac{k}{2}}}{\Gamma\left(\frac{k}{2}+1\right)}$ and $\left|\mathbb{S}^n\right|=\frac{2 \pi^{\frac{ n+1 }{2}}}{\Gamma\left(\frac{n+1}{2}\right)}$, we compute
\begin{align*}
\frac{\Gamma\left(\frac{n+1}{2}\right) \Gamma\left(\frac{m+1}{2}\right)}{2 \Gamma\left(\frac{n+m+2}{2}\right)} \cdot \frac{\left|\mathbb{B}^{m-1}\right|\left|\mathbb{S}^n\right|}{\left|\mathbb{B}^{n+m}\right|}
=& \frac{\Gamma\left(\frac{n+1}{2}\right) \Gamma\left(\frac{m+1}{2}\right)}{2 \Gamma\left(\frac{n+m+2}{2}\right)} \cdot \frac{2 \pi^{\frac{ m-1 }{2} }\pi^{\frac{ n+1 }{2}}}{\Gamma\left(\frac{m+1}{2}\right) \Gamma\left(\frac{n+1}{2}\right)} \cdot \frac{\Gamma\left(\frac{n+m+2}{2}\right)}{\pi^{ \frac{n+m}{2} }}\\
=& 1.
\end{align*}
We conclude that
$$
\lim_{r\to\infty}\frac{\left|\exp \left(\Sigma_r^{+}\right)\right|}{\left|N_r(\Sigma)\right|}=1.
$$
i.e. the image of $\left\{(x, z) \in T^{\perp} \Sigma^{+}:\langle z, \sigma(x)\rangle \le 0\right\}$ under the exponential map has asymptotic volume density one in $M$.
\end{proof}

\end{document}